\newtheorem{theorem}{Theorem}[section]
\newtheorem{proposition}[theorem]{Proposition}
\newtheorem{corollary}[theorem]{Corollary}
\theoremstyle{definition}
\newtheorem{definition}[theorem]{Definition}
\newtheorem{example}[theorem]{Example}
\theoremstyle{remark}
\newtheorem{remark}[theorem]{Remark}
\numberwithin{equation}{section}
\newcommand\M{\mathbb{M}}
\newcommand\cA{\mathcal{A}}
\newcommand\cT{\mathcal{T}}
\def\sideremark#1{\ifvmode\leavevmode\fi\vadjust{\vbox
to0pt{\vss \hbox to 0pt{\hskip\hsize\hskip1em
\vbox{\hsize2cm\tiny\raggedright\pretolerance10000
\noindent#1\hfill}\hss}\vbox to8pt{\vfil}\vss}}}
\begin{document}

\title{Some Structural Properties of homomorphism dilation systems  for Linear Maps}

\author{Deguang Han}
\address{Department of Mathematics, University of Central
Florida, Orlando, USA} \email{deguang.han@ucf.edu}

\author{David R. Larson}
\address{Department of Mathematics, Texas A\&M University, College Station, USA}
\email{larson@math.tamu.edu}

\author{Bei Liu}
\address{Department of Mathematics, Tianjin University of Technology, Tianjin, China}
\email{beiliu1101@gmail.com}

\author{Rui Liu}
\address{Department of Mathematics and LPMC, Nankai University, Tianjin, China}
\address{Department of Mathematics, Texas A\&M University, College Station, USA}
\email{ruiliu@nankai.edu.cn; rliu@math.tamu.edu}

\begin{abstract} Inspired by some recent development on the theory about projection valued dilations for operator valued measures or more generally  bounded homomorphism dilations for  bounded linear maps on Banach algebras,  we explore a pure algebraic version of the dilation theory for linear systems acting on unital algebras and vector spaces. By introducing two natural dilation structures, namely the canonical and the universal dilation systems, we prove that every linearly minimal dilation is equivalent to a reduced homomorphism dilation of the universal dilation, and all the  linearly minimal homomorphism dilations can be classified by  the associated reduced subspaces contained in the kernel of synthesis operator for the universal dilation.
\end{abstract}

\thanks{Deguang Han acknowledges partial support by NSF grants DMS-1106934 and DMS-1403400. Bei Liu and Rui Liu  both are supported by NSFC grants 11201336 and 11001134}

\date{}


\keywords{Linear systems, linearly minimal homomorphism dilation systems, principle and universal dilations, equivalent dilation systems  }

\maketitle

\section{Introduction} In a recent AMS Memoir \cite{HLLL-AMS} we
established a general dilation theory for operator
valued measures acting on Banach spaces where the operator-valued
measures are not necessarily completely bounded.  This  naturally extends to bounded linear maps acting on Banach algebras and Banach spaces, which can be viewed as a noncommutative analogue for the dilations of operator valued measures. This investigation was mainly motivated by some recent dilation results in frame theory (c.f. \cite{CHL, GH, GH2, HL, Han}), in particular, by a general dilation theorem for framings established by Casazza, Han and Larson \cite{CHL} which states that  every framing  (even for a Hilbert space) can have a basis dilation which is highly `` non-Hilbertian" in nature and the dilation space has to be a Banach space in general. This  is viewed as a
true generalization of the well-known Naimark dilation theory \cite{Naimark-1, Naimark-2, Pa} for positive operator valued measures, in which case the Hilbertian structure can be completely captured by the dilation space. The Naimark dilation theorem states that every positive operator valued measure has a (self-adjoint) projection valued dilation acting on a Hilbert space. We built in  \cite{HLLL-AMS, HLLL-F, HLLL-Con}  some interesting connections between frame
theory  and dilations of operator-valued measures on one hand,   and the dilations of bounded linear maps between von Neumann algebras on the other hand.  It was proved that  any operator-valued measure, not necessarily completely bounded, always has a dilation to a projection (idempotent) valued measure acting on a Banach space. More generally,  every bounded linear map acting on a Banach algebra  has a bounded homomorphism dilation acting on a Banach space.   Here the bounded linear map needs not to be completely bounded and the
dilation space often needs to be a Banach space even if the underlying space is a Hilbert space, and the underlying algebra is a von Neumann algebra. A typical example is the transpose map on the algebra $B(H)$ of all bounded linear operators on a Hilbert space $H$. This map is not completely bounded but it has a bounded homomorphism dilation
on a Banach space and the dilation space can never be taken as a Hilbert space. Such examples also exist for commutative $C^{*}$-algebras \cite{HLLL-AMS}. Therefore the bounded homomorphism dilation theory for any bounded linear maps truly generalizes the Stinespring's dilation theorem (c.f. \cite{Aev, HA, Pa, St}) which states that a bounded linear map on a $C^{*}$-algebra  admits a $*$-homomorphism dilation (acting on a Hilbert space) if and only if it is completely bounded. It was pointed out in \cite{HLLL-AMS} that the problem for the existence of a non-completely bounded linear map that admits a Hilbertian bounded homomorphism dilation is equivalent to Kadison's similarity problem \cite{Ka}.   All these results indicate that it  might be possible to establish some kind of classification theory for  bounded linear maps  based on the properties of their dilations for more general Banach algebras and Banach spaces. 

 In the dilation theorems for general operator valued measures or general bounded linear maps the dilation Banach space was built on a natural ``smallest" dilation vector space equipped with a proper dilation norm  so that the involved homomorphisms and linear maps are continuous with respect to the dilation norm.  However, neither the (algebraic)  dilation space  nor the dilation norm is in general not unique. So it seems that there might be  some structural theory involved in the ``classification" of bounded linear map based on the dilations spaces and the dilation norms, and the completely bounded maps belong to a special class within this structural theory. In order to understand the topological nature of the dilation theory for continuous maps,  a good understanding on the purely algebraic aspects of the dilation theory for linear maps is naturally needed. However it seems to us that  there is no systematic investigation  (at least we are not aware of ) in the literature so far.  Our aim of this paper is to present several structural results involving the classification of algebraic homomorphism dilations for linear maps acting on general vector spaces. With our ultimate goal of establishing a classification theory of Banach space homomorphism dilations on various dilations spaces, we hope that this paper serves as a first step of this effort.

The rest of the paper is organized as follows: In section 2 we introduce two natural dilations, the canonical dilation and the universal dilation. While the canonical dilation serves as the ``smallest" dilation system,  the universal one indeed serves as the ``largest" dilation  system. Naturally we prove that all the irreducible dilations are equivalent to the canonical dilation, and every dilation is equivalent to a reduced dilation of the universal dilation.  The main classification results are presented in section 3 in which all the dilations are classified by their  associated reduced subspaces contained in the kernel of synthesis operator from the universal dilation. We provide a few remarks and examples in section 4 to demonstrate the complexity and the rich structure of the algebraic dilation theory. 

\section{Principle and Universal Dilations}

A {\it linear system}  is a triple $(\varphi, \mathcal{A}, V)$ such that $\varphi$ is a unital linear map from  a unital algebra  $\mathcal{A}$ to $L(V)$, where $V$ is a vector space and $L(V)$ denotes the space of all linear maps from $V$ to $V$. In the case that $\mathcal{A}$ is well understood in the discussion we will  usually skip $\mathcal{A}$ from the notation.

\begin{definition}  A homomorphism dilation system   of  a linear system $(\varphi, V)$ is a unital homomorphism $\pi$ from $\mathcal{A}$ to a linear operator space $L(W)$ for some vector space $W$ such that there exist an injective linear map $T: V\rightarrow W$ and a surjective linear map $S:W\rightarrow V$ such that for all $a\in \cA$ the following diagram commutes
\[   \xymatrix{ W\ar[rr]^{\pi(a)} & & W\ar[d]^{S} \\
V\ar[u]^{T}\ar[rr]^{\varphi(a)} & & V  }\]
That is,
$$
\varphi(a) = S\pi(a)T, \  \  \  \forall a\in \mathcal{A}.
$$

\end{definition}

We will use $(\pi,  S, T, W)$ to denote this homomorphism dilation system,  and the dimension of $W$ is called the {\it dilation dimension} of the homomorphism dilation system $(\pi, S, T, W)$. For our convenience we call $T$ as the {\it analysis operator} and $S$ as the {\it synthesis operator} for the dilation system.  If $ker(S) $ contains a nonzero $\pi$-invariant subspace, then we say that $(\pi, S, T, W)$ is {\it reducible}, and otherwise it is called {\it irreducible}.

Suppose that $K$ is a $\pi$-invariant nonzero subspace of $ker (S)$. Define $\tilde{W} = W/K$, and let $\tilde{S}: \tilde{W} \rightarrow V$, $\tilde{T}: V\rightarrow \tilde{W}$ and $\tilde{\pi}: \mathcal{A} \rightarrow L(\tilde{W})$ be the induced linear maps. Then we have  for any $a\in\mathcal{A}$ and any $v\in V$ that
$$
\tilde{S}\tilde{\pi}\tilde{T}(v) = \varphi(a)v.
$$
Thus $(\tilde{\pi}, \tilde{S}, \tilde{T}, \tilde{W}) $ is an homomorphism dilation of $(\varphi, V)$  and we call it a {\it  reduced homomorphism dilation}  of $(\pi, S, T, W)$ associated with $K$. If $K$ is the maximal $\pi$-invariant subspace contained in $ker(S)$, then it is easy to show that $ker(\tilde{S})$ does not contain any nonzero $\tilde{\pi}$-invariant subspace anymore, and hence the reduced dilation homomorphism system  $(\tilde{\pi}, \tilde{S}, \tilde{T}, \tilde{W}) $ is irreducible.

\begin{definition} An homomorphism dilation system $(\pi, W, S, T)$ of a linear system $(\varphi, V)$ is called {\it linearly minimal} if $span\{\pi(\mathcal{A})TV\} = W$, and it is called  a {\it principle  dilation} if it  is both linearly minimal and irreducible.
\end{definition}

Let $(\pi,  S, T, W)$ be a homomorphism dilation system. Clearly, by replacing $W$ with $span\{\pi(\mathcal{A})TV\}$, we  get a linearly minimal dilation. Then, the reduced dilation system of the new linearly minimal dilation corresponding to the maximal invariant subspace  is irreducible. Therefore any homomorphism dilation system leads to a principle dilation system. In what follows we will focused only on linearly minimal dilations.

We construct two very special but important dilations for a given linear system that are essential for our structural theory of dilations. We first introduce  the canonical dilation. Let $(\varphi, \mathcal{A}, V)$ be a linear system.   For $a\in \mathcal{A}, x\in V$, define $\alpha_{a,x}\in L(\mathcal{A},V)$ by
$$\alpha_{a,x}(\cdot):=\varphi(\cdot a) x.$$ Let $W:=span\{\alpha_{a,x}:a\in  \mathcal{A},x\in V\}\subset L(\mathcal{A},V).$
Define $\pi_{c}: \mathcal{A}\rightarrow L(W)$ by
$\pi_{c}(a)(\alpha_{b,x}):=\alpha_{ab,x}.$ It is easy to see that
$\pi_{c}$ is a unital homomorphism. For $x\in V$ define $T:V
\rightarrow L(\mathcal{A}, V)$ by $T_x:=\alpha_{I,x}=\phi(\cdot
I)x=\phi(\cdot)x$. Define $S:W\rightarrow W$ by setting
$S(\alpha_{a,x}):=\phi(a)x$ and extending linearly to $W$. If
$a\in \mathcal{A}, x\in V$ are arbitrary, we have
$S\pi_{c}(a)Tx=S\pi_{c}(a)\alpha_{I,x}=S \alpha_{a,x}=\phi(a)x.$ Hence
$\varphi(a)=S\pi_{c}(a)T$  for all $a\in \mathcal{A}$. Thus $(\pi_{c}, S, T, W)$ is a dilation homomorphism of $(\varphi, V)$, and  we  will call it  the {\it canonical dilation} of $(\varphi, V)$.

By  the construction of $W$ and the definitions of $T$ and $\pi_{c}$ it is obvious that $(\pi_{c}, S, T, W)$ is a linearly minimal dilation. For the irreducibility, note that
$$
ker(S) = \{\sum_{i}c_{i}\alpha_{a_{i}, x_{i}}\in W:  \sum_{i}c_{i}\varphi(a_{i}) x_{i} =0 \}
$$

Let $w=\sum_{i}c_{i}\alpha_{a_{i}, x_{i}}\in ker S$. Then $\pi_{c}(a)w \in ker S$ for all $a\in \mathcal{A}$ if and only if  $\sum_{i}c_{i}\varphi(aa_{i})(x_{i}) = 0$ for all $a\in \mathcal{A}$, which in turn is equivalent to the condition $w = \sum_{i}c_{i}\alpha_{a_{i}, x_{i}} = 0$ as an element in $L(\mathcal{A}, V)$. Therefore $ker (S)$ does not contain any nontrivial $\pi_{c}$-invariant subspaces, and consequently we obtain:

\begin{proposition} The canonical dilation of a linear system $(\varphi, \mathcal{A}, V)$ is a principle dilation.
\end{proposition}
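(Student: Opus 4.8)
The plan is to verify directly the two conditions that together define a principle dilation for the canonical dilation $(\pi_c, S, T, W)$: that it is linearly minimal and that it is irreducible. I would treat these separately, since the first is purely a matter of unwinding the construction while the second requires a short but essential reinterpretation.

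For linear minimality, I would begin from the identity $\pi_c(a)Tx = \pi_c(a)\alpha_{I,x} = \alpha_{a,x}$, valid for all $a \in \mathcal{A}$ and $x \in V$ by the definitions of $T$ and $\pi_c$. Since $W$ is by construction the span of the elements $\alpha_{a,x}$, and these are exactly the vectors of the form $\pi_c(a)Tx$, it follows at once that $span\{\pi_c(\mathcal{A})TV\} = W$. This step is immediate and presents no difficulty.

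For irreducibility, the goal is to show that $ker(S)$ contains no nonzero $\pi_c$-invariant subspace. I would first record the explicit description $ker(S) = \{\sum_i c_i \alpha_{a_i,x_i} : \sum_i c_i \varphi(a_i)x_i = 0\}$, which is immediate from $S(\alpha_{a,x}) = \varphi(a)x$. Then, taking an arbitrary $w = \sum_i c_i \alpha_{a_i,x_i}$ lying in a $\pi_c$-invariant subspace $M \subseteq ker(S)$, invariance forces $\pi_c(a)w \in ker(S)$ for every $a \in \mathcal{A}$; computing $\pi_c(a)w = \sum_i c_i \alpha_{aa_i,x_i}$ and applying $S$ gives $\sum_i c_i \varphi(aa_i)x_i = 0$ for all $a$.

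The crux --- and the only real, if modest, obstacle --- is to recognize what this last family of equations says. Viewing $w$ as an element of $L(\mathcal{A}, V)$, it acts by $w(b) = \sum_i c_i \varphi(ba_i)x_i$, so the condition $\sum_i c_i \varphi(aa_i)x_i = 0$ for all $a$ is exactly the statement that $w(a) = 0$ for every $a \in \mathcal{A}$, i.e.\ $w = 0$ in $L(\mathcal{A}, V)$. Hence $M = \{0\}$, and $ker(S)$ harbors no nontrivial invariant subspace. Combining the two parts, $(\pi_c, S, T, W)$ is both linearly minimal and irreducible, and therefore a principle dilation.
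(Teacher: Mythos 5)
Your proof is correct and follows essentially the same route as the paper: linear minimality from $\pi_c(a)Tx=\alpha_{a,x}$, and irreducibility by observing that $\sum_i c_i\varphi(aa_i)x_i=0$ for all $a$ is precisely the statement that $w=\sum_i c_i\alpha_{a_i,x_i}$ vanishes as an element of $L(\mathcal{A},V)$. Your write-up merely makes explicit the steps the paper compresses.
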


\begin{remark} In the case that $\varphi$ is already a unital homomorphism, the canonical dilation $\pi_{c}$ must be  $\varphi$. This can be easily seen by mapping $x\in V$ to $\alpha_{I, x}\in W$. Clearly this is well-defined and linear. The surjectivity follows from the fact that
$$
\alpha_{a, x}(b) = \varphi(ba)x = \varphi(b)\varphi(a)x = \alpha_{I, \varphi(a)x}(b)
$$
i.e., $\alpha_{a, x} = \alpha_{I, \varphi(a)x}$. With this identification it is easy to see that $S$ and $T$ constructed in the canonical dilation are inverse to each other.

\end{remark}

We will see in the next section that the canonical dilation is the one that has the ``smallest" dilation dimension and all the principle dilations are equivalent.
Note that for any linearly minimal dilation $(\pi, S, T, W)$ for a finite-dimensional system $(\varphi, \mathcal{A},  V)$, we always have $dim W \leq (dim \mathcal{A}) (dim V)$.    Now we construct a linearly minimal dilation which has the maximal dilation dimension $(dim \mathcal{A}) (dim V)$ , and we will show later that every linearly minimal dilation system is equivalent to a reduced dilation system of this  dilation. 
\vspace{3mm}

Let $W = \mathcal{A}\otimes V$.   Define $\pi_{u}: \mathcal{A} \rightarrow L(W)$, $S: W\rightarrow V$ and  $T: V \rightarrow W$ by the following:
$$
\pi_{u}(a) (\sum_{i}c_{i}a_{i}\otimes x_{i}) = \sum_{i}c_{i}(ab_{i})\otimes x_{i},
$$
$$
Tx = I\otimes x, \   \  \  \  \  \  S(\sum_{i}c_{i}a_{i}\otimes x_{i})  = \sum_{i}c_{i}\varphi(a_{i})x_{i}.
$$
Then $\pi_{u}$ is a homomorphism and  $$S\pi_{u}(a)Tx = S\pi_{u}(a)(I\otimes x) = S (a\otimes x) = \varphi(a)x$$ for all $x\in V$ and all $a\in \mathcal{A}$. Thus $(\pi_{u}, S, T, W)$ is a homomorphism dilation system of $(\varphi, V)$. Moreover, since $\pi_{u}(a)Tx = a\otimes x$, we have $span \{\pi_{u}(a)Tx:  a\in\mathcal{A}, x\in V\} = W$. Thus $(\pi_{u}, S, T, W)$ is a linearly minimal dilation system with the property $dim W = (dim \mathcal{A}) (dim V)$.

\begin{definition} The above constructed dilation $(\pi_{u}, S, T, W)$ is called the {\it universal dilation}  of $(\varphi, V)$.
\end{definition}

\section{The Structural Theorems}

In this section we present our main results about the classifications of all linearly minimal homomorphism dilations.

\begin{definition} Let $(\pi_{1}, S_{1}, T_{1}, W_{1})$ and $(\pi_{2}, S_{2}, T_{2}, W_{2})$ be two linearly minimal homomorphism dilation systems for a linearly system $(\varphi, V)$. We say that the two dilation homomorphism systems are  {\it equivalent} if there exists a bijective linear map $R: W_{1}\rightarrow W_{2}$ such that $RT_{1} = T_{2}$, $S_{2}R = S_{1}$ and $\pi_{1}(a) = R^{-1}\pi_{2}(a)R$ for all $a\in\mathcal{A}$,
\end{definition}

We first point out that $S_{2}R = S_{1}$ automatically follows from the other two conditions.

\begin{proposition} \label{lemma-1} Let $(\pi_{1}, S_{1}, T_{1}, W_{1})$ and $(\pi_{2}, S_{2}, T_{2}, W_{2})$ be two linearly minimal homomorphism dilation systems for a linearly system $(\varphi, V)$. If there exists a bijective linear map $R: W_{1}\rightarrow W_{2}$ such that $RT_{1} = T_{2}$ and $\pi_{1}(a) = R^{-1}\pi_{2}(a)R$ for all $a\in\mathcal{A}$, then $S_{2}R = S_{1}$ and hence the two systems are equivalent.
\end{proposition}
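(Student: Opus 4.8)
The plan is to exploit the linear minimality of the first system so as to reduce the desired identity $S_2 R = S_1$ to a verification on a spanning set. Since $S_1$, $S_2$ and $R$ are all linear, the two maps $S_2 R$ and $S_1$ from $W_1$ to $V$ coincide as soon as they agree on any collection of vectors whose span is $W_1$. By hypothesis $(\pi_1, S_1, T_1, W_1)$ is linearly minimal, so $span\{\pi_1(a)T_1 v : a\in\mathcal{A},\, v\in V\} = W_1$, and it therefore suffices to evaluate both maps on an arbitrary generator $\pi_1(a)T_1 v$.

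On such a generator the value of $S_1$ is immediate from the dilation identity of the first system, namely $S_1\pi_1(a)T_1 v = \varphi(a)v$. For $S_2 R$ the key step is to transport $R$ across $\pi_1(a)$ and $T_1$ by means of the two intertwining hypotheses. From $\pi_1(a) = R^{-1}\pi_2(a)R$ we obtain $R\pi_1(a) = \pi_2(a)R$, and combining this with $RT_1 = T_2$ gives $R\pi_1(a)T_1 v = \pi_2(a)RT_1 v = \pi_2(a)T_2 v$. Applying $S_2$ and invoking the dilation identity of the second system then yields $S_2 R\pi_1(a)T_1 v = S_2\pi_2(a)T_2 v = \varphi(a)v$. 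Hence $S_2 R$ and $S_1$ agree on every generator, and by linearity on all of $W_1$, which is exactly the claim $S_2 R = S_1$.

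There is no delicate computation to grind through here; the entire content lies in the two intertwining relations together with the dilation identities. The one point that must not be overlooked — and which I regard as the conceptual crux rather than a genuine obstacle — is that the \emph{linear minimality} of $(\pi_1, S_1, T_1, W_1)$ is indispensable. The synthesis operator is not pinned down by the dilation relation alone on an arbitrary dilation space; it is precisely the fact that $W_1$ is generated by the vectors $\pi_1(a)T_1 v$ that allows the dilation identity to determine both $S_1$ and $S_2 R$ everywhere. Without this hypothesis the conclusion can fail, so the redundancy of the condition $S_2 R = S_1$ in the definition of equivalence is a feature specific to the linearly minimal setting, and the proof should make this dependence explicit.
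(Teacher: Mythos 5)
Your argument is correct and is essentially identical to the paper's proof: both reduce the identity $S_2R=S_1$ to the spanning set $\pi_1(\mathcal{A})T_1V$ furnished by linear minimality, then use $R\pi_1(a)=\pi_2(a)R$ and $RT_1=T_2$ together with the two dilation identities to evaluate both sides as $\varphi(a)v$. The paper simply writes the computation for a general linear combination $w=\sum_i c_i\pi_1(a_i)T_1x_i$ rather than for a single generator.
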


\begin{proof} Since the dilation systems are linearly minimal, we have that $W_{j} = span \  \pi(\mathcal{A})T_{j}(V)$ for $j=1, 2$.  So for any $w = \sum_{i}c_{i}\pi_{1}(a_{i})T_{1}x_{i}\in W_{1}$, we get

\begin{eqnarray*}
S_{2}R w &= &S_{2}(\sum_{i}c_{i}R\pi_{1}(a_{i})T_{1}x_{i}) = S_{2}(\sum_{i}c_{i}\pi_{2}(a_{i})RT_{1}x_{i}) \\
&= &\sum_{i}c_{i}S_{2}\pi_{2}(a_{i})T_{2}x_{i} = \sum_{i}c_{i} \varphi(a_{i}) x_{i} \\
&= &\sum_{i}c_{i}S_{1}\pi_{1}(a_{i})T_{1}x_{i} = S_{1}w.
 \end{eqnarray*}
 Thus $S_{2}R = S_{1}$.

\end{proof}

The following tells us all the  principle  homomorphism dilation systems are equivalent:

\begin{theorem} \label{equivalent} If $(\pi_{1}, S_{1}, T_{1}, W_{1})$ and $(\pi_{2}, S_{2}, T_{2}, W_{2})$ are two principle homomorphism dilation systems for $(\varphi, \mathcal{A}, V)$ , then $(\pi_{1}, S_{1}, T_{1}, W_{1})$ and $(\pi_{2}, S_{2}, T_{2}, W_{2})$  are equivalent.
\end{theorem}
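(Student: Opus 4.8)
The plan is to build the equivalence map directly on the spanning sets furnished by linear minimality, and to use irreducibility precisely to make this map well defined. Since both systems are linearly minimal, we have $W_{j} = span\{\pi_{j}(a)T_{j}x : a\in\mathcal{A},\, x\in V\}$ for $j=1,2$. I would define $R: W_{1}\to W_{2}$ on these generators by
$$
R\Big(\sum_{i}c_{i}\pi_{1}(a_{i})T_{1}x_{i}\Big) = \sum_{i}c_{i}\pi_{2}(a_{i})T_{2}x_{i}
$$
and extend linearly. Granting for the moment that $R$ is well defined and bijective, the remaining checks are routine: $RT_{1}=T_{2}$ follows from $T_{j}x = \pi_{j}(I)T_{j}x$ together with unitality, the intertwining relation $R\pi_{1}(a)=\pi_{2}(a)R$ follows on generators from $\pi_{j}(a)\pi_{j}(b)=\pi_{j}(ab)$, and then $S_{2}R=S_{1}$ comes for free from Proposition \ref{lemma-1}. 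So everything reduces to verifying that $R$ is a well-defined bijection.

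The heart of the argument, and the step I expect to be the main obstacle, is well-definedness. Concretely, I must show that whenever $w=\sum_{i}c_{i}\pi_{1}(a_{i})T_{1}x_{i}=0$ in $W_{1}$, the corresponding element $u=\sum_{i}c_{i}\pi_{2}(a_{i})T_{2}x_{i}$ vanishes in $W_{2}$. The key point is that applying a synthesis operator to a $\pi$-translate of such a sum yields a value of $\varphi$ that depends only on the coefficients, not on the ambient system: for every $b\in\mathcal{A}$,
\begin{align*}
S_{2}\pi_{2}(b)u &= \sum_{i}c_{i}S_{2}\pi_{2}(ba_{i})T_{2}x_{i} = \sum_{i}c_{i}\varphi(ba_{i})x_{i} \\
&= \sum_{i}c_{i}S_{1}\pi_{1}(ba_{i})T_{1}x_{i} = S_{1}\pi_{1}(b)w.
\end{align*}
Since $w=0$, this gives $S_{2}\pi_{2}(b)u=0$ for all $b\in\mathcal{A}$.

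Now I would invoke irreducibility. The subspace $U=span\{\pi_{2}(b)u : b\in\mathcal{A}\}$ is $\pi_{2}$-invariant, because $\pi_{2}(c)\pi_{2}(b)u=\pi_{2}(cb)u$, and by the displayed identity it is contained in $ker(S_{2})$. Since $(\pi_{2},S_{2},T_{2},W_{2})$ is irreducible, $ker(S_{2})$ contains no nonzero $\pi_{2}$-invariant subspace, forcing $U=0$; as $\pi_{2}$ is unital, $u=\pi_{2}(I)u\in U$, so $u=0$. This shows $R$ is a well-defined linear map. Running the identical argument with the roles of the two systems interchanged produces the reverse assignment $\sum_{i}c_{i}\pi_{2}(a_{i})T_{2}x_{i}\mapsto\sum_{i}c_{i}\pi_{1}(a_{i})T_{1}x_{i}$, which is a well-defined linear inverse for $R$; hence $R$ is bijective. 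Combined with the routine checks above and Proposition \ref{lemma-1}, this establishes the equivalence.
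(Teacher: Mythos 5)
Your proposal is correct and follows essentially the same route as the paper: define $R$ on the spanning set provided by linear minimality, reduce well-definedness to showing that vanishing of one combination forces vanishing of the other, and obtain that by observing that the span of the $\pi$-translates of the offending element is a $\pi$-invariant subspace of the kernel of the synthesis operator, which irreducibility forces to be zero; the paper phrases this as a contradiction using irreducibility of the first system while you argue directly using irreducibility of the second, but these are the two symmetric halves of the same argument. The concluding steps (intertwining on generators, $RT_{1}=T_{2}$ via unitality, and $S_{2}R=S_{1}$ from Proposition \ref{lemma-1}) also match the paper.
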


\begin{proof} Since both dilations are linearly minimal, we have  $W_{i} = span \ \pi(\mathcal{A})T_{i}(V)$ for $i=1, 2$.  Define $R: W_{1} \rightarrow W_{2}$ by
$$
R(w) = \sum_{i}c_{i}\pi_{2}(a_{i})T_{2}(v_{i})
$$
if $w = \sum_{i}c_{i}\pi_{1}(a_{i})T_{1}(v_{i}$). In order for $T$ to be well-defined and induces the equivalence between $\pi_{1}$ and $\pi_{2}$, it suffices to show that $$w = \sum_{i}c_{i}\pi_{1}(a_{i})T_{1}(v_{i}) = 0$$ if and only if $$\sum_{i}c_{i}\pi_{2}(a_{i})T_{2}(v_{i}) = 0.$$ Assume to the contrary that $w \neq 0$.
Since
$$
S_{1}w = \sum_{i}c_{i}\varphi(a_{i})v_{i} = S_{2}\sum_{i}c_{i}\pi_{2}(a_{i})T_{2}(v_{i}) =S_{2}(0) = 0
$$
we get that $w\in ker(S_{1})$. Moreover,

\begin{eqnarray*}
S_{1}\pi_{1}(a)w &=& \sum_{i}c_{i}S_{1}\pi_{1}(aa_{i})T_{1}(v_{i})\\
&=& \sum_{i}c_{i}S_{1}\varphi(aa_{i})(v_{i})\\
&=& \sum_{i}c_{i}S_{2}\pi_{2}(aa_{i})T_{2}(v_{i})\\
&=& S_{2}\pi_{2}(a) \sum_{i}c_{i}\pi_{2}(a_{i})T_{2}(v_{i})\\
&=& S_{2}\pi_{2}(a)(0) = 0.
\end{eqnarray*}
Thus, $\pi_{1}(a)w\in ker(S_{1})$ for all $a\in\mathcal{A}$. So  $M = \{\pi_{1}(a)w: a\in\mathcal{A}\}$ is a nonzero $\pi_{1}$-invariant subspace insider $ker(S_{1})$, which leads to a contradiction since the  dilation $(\pi_{1}, S_{1}, T_{1}, W_{1})$ is irreducible. The argument for the other direction is the same. By the definition of $R$, we clearly have for  any $w = \sum_{i}c_{i}\pi_{1}(a_{i})T_{1}(v_{i}) \in W_{1}$ that
\begin{eqnarray*}
R\pi_{1}(a)w &=& R\sum_{i}c_{i}\pi_{1}(aa_{i})T_{1}(v_{i}) = \sum_{i}c_{i}\pi_{2}(aa_{i})T_{2}(v_{i}) \\
&=& \pi_{2}(a)\sum_{i}c_{i}\pi_{2}(a_{i})T_{2}(v_{i}) = \pi_{2}(a)Rw,
\end{eqnarray*}
and $RT_{1}(v) = T_{2}v$ for any $v\in V$. Thus we get $\pi_{1}(a) = R^{-1}\pi_{2}(a)R$ and $RT_{1} = T_{2}$, and therefore, by Proposition \ref{lemma-1}, we have that  $(\pi_{1}, S_{1}, T_{1}, W_{1})$ and $(\pi_{2}, S_{2}, T_{2}, W_{2})$  are equivalent.
\end{proof}

\begin{corollary}  Let $(\varphi, \mathcal{A}, V)$ be a linear system such that both $\mathcal{A}$ and $V$ are finite dimensional.

(i) Assume that $(\pi, S, T, W)$ is a principle dilation system of $(\varphi, V)$ such that $dim (W) = (dim \mathcal{A}) (dim (V)) $. Then any linearly minimal dilation system of $(\varphi, V)$ is irreducible, and hence a principle dilation system.

(ii) Assume that $(\pi, S, T, W)$ is a principle dilation system of $(\varphi, V)$. If $(\pi_{1}, S_{1}, T_{1}, W_{1})$ is a minimal dilation system of $(\varphi, V)$ such that $dim W_{1} \leq dim W$, then it is irreducible.
\end{corollary}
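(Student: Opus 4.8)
The plan is to reduce both statements to a single dimension count. Write $d_{0}$ for the dilation dimension of the canonical dilation $(\pi_{c}, S, T, W_{c})$ of $(\varphi, V)$. Since the canonical dilation is principle and, by Theorem \ref{equivalent}, every principle dilation is equivalent to it, and since an equivalence is by definition a linear bijection between the two dilation spaces, every principle dilation of $(\varphi, V)$ has dilation dimension exactly $d_{0}$. In particular, under the hypothesis of part (i) we will have $d_{0} = (\dim \mathcal{A})(\dim V)$.

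The key step is a dimensional characterization of irreducibility for linearly minimal dilations. Let $(\pi_{1}, S_{1}, T_{1}, W_{1})$ be any linearly minimal dilation of $(\varphi, V)$ (here $W_{1}$ is finite dimensional, since $\dim W_{1} \leq (\dim \mathcal{A})(\dim V)$). Let $K_{1}$ be the maximal $\pi_{1}$-invariant subspace contained in $\ker(S_{1})$, and pass to the reduced dilation $(\tilde{\pi}_{1}, \tilde{S}_{1}, \tilde{T}_{1}, \tilde{W}_{1})$ with $\tilde{W}_{1} = W_{1}/K_{1}$. This reduced dilation is again linearly minimal, because the quotient map carries the spanning set $\pi_{1}(\mathcal{A})T_{1}V$ onto the corresponding spanning set of $\tilde{W}_{1}$, and it is irreducible by the construction of the reduced dilation via the maximal invariant subspace. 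Hence it is a principle dilation, so by the previous paragraph $\dim \tilde{W}_{1} = d_{0}$. Since $\dim \tilde{W}_{1} = \dim W_{1} - \dim K_{1}$ in finite dimensions, we obtain
$$
\dim W_{1} = d_{0} + \dim K_{1} \geq d_{0},
$$
with equality if and only if $K_{1} = \{0\}$, i.e. if and only if $(\pi_{1}, S_{1}, T_{1}, W_{1})$ is irreducible.

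Both parts then follow immediately. For (ii), the given principle dilation $(\pi, S, T, W)$ has $\dim W = d_{0}$; if $(\pi_{1}, S_{1}, T_{1}, W_{1})$ is linearly minimal with $\dim W_{1} \leq \dim W = d_{0}$, then the lower bound above forces $\dim W_{1} = d_{0}$, hence $K_{1} = \{0\}$ and the dilation is irreducible. For (i), the hypothesis gives $d_{0} = (\dim \mathcal{A})(\dim V)$, and every linearly minimal dilation satisfies the general upper bound $\dim W_{1} \leq (\dim \mathcal{A})(\dim V) = d_{0}$, so again $\dim W_{1} = d_{0}$ and the dilation is irreducible, hence principle; thus (i) is precisely the special case of (ii) in which $\dim W$ is maximal, so that the dimension hypothesis holds automatically for every linearly minimal dilation. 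The whole argument rests on sandwiching $\dim W_{1}$ between the universal upper bound $(\dim \mathcal{A})(\dim V)$ and the lower bound $d_{0}$ coming from the reduced principle dilation; the only point requiring care rather than assumption is that $W_{1}/K_{1}$ remains linearly minimal so that Theorem \ref{equivalent} applies, which I expect to be the single genuine (though routine) obstacle, and it is immediate since the quotient of a spanning set spans the quotient.
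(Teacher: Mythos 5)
Your proposal is correct and follows essentially the same route as the paper: pass to the reduced dilation modulo the maximal $\pi_{1}$-invariant subspace of $\ker(S_{1})$, note it is linearly minimal and irreducible hence principle, invoke Theorem \ref{equivalent} to pin its dimension to that of the given principle dilation, and conclude by the dimension sandwich that the invariant subspace is zero. Your explicit check that linear minimality passes to the quotient is a detail the paper leaves implicit, but the argument is the same.
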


\begin{proof} (i) Let $(\pi_{1}, S_{1}, T_{1}, W_{1})$ be a linearly minimal dilation system of $(\varphi, V)$. Then $$dim W_{1} = dim span\{\pi_{1}(\mathcal{A})T_{1}V\} \leq  (dim \mathcal{A}) (dim (V))  = dim W.$$ Let  $(\tilde{\pi_{1}}, \tilde{S_{1}}, \tilde{T_{1}}, \tilde{W_{1}})$ be the reduced dilation system of $(\pi_{1}, S_{1}, T_{1}, W_{1})$ corresponding to the maximal $\pi_{1}$-invariant subspace of $ker(S_{1})$.  Then  $(\tilde{\pi_{1}}, \tilde{S_{1}}, \tilde{T_{1}}, \tilde{W_{1}})$  is both irreducible and linearly minimal. Thus it is a principle dilation system. By Theorem \ref{equivalent}, we get that $\pi$ and $\tilde{\pi_{1}}$ are equivalent, and hence
$dim \tilde{W_{1}} = dim W$. Since $dim \tilde{W}_{1} \leq dim W_{1} \leq dim W$, we obtain that $dim W_{1} = dim \tilde{W_{1}}$, which implies that $(\pi_{1}, S_{1}, T_{1}, W_{1})$ is irreducible.

(ii) Clearly the same argument above also works for part (ii).
\end{proof}

\begin{corollary} Let $(\pi_{1}, S_{1}, T_{1}, W_{1})$ be a linearly minimal dilation system of $(\varphi, V)$.

(i) If $ker(S_{1})$ does not contain any nonzero $\pi$-invariant subspaces, then $\pi_{1}$ is equivalent to the canonical homomorphism dilation $\pi_{c}$.

 (ii) Assume that  $dim W_{1} < \infty$. Then  if $\pi_{1}$ is equivalent to the canonical homomorphism $\pi_{c}$, then  $ker(S_{1})$ does not contain any nonzero $\pi$-invariant subspaces.
\end{corollary}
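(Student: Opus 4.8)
The plan is to handle the two parts separately: part (i) will fall out immediately from the structure already built, while part (ii) will require a dimension count, and that is where the finiteness hypothesis is spent.

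For part (i), I would first note that the hypothesis---that $ker(S_1)$ contains no nonzero $\pi_1$-invariant subspace---is precisely the definition of irreducibility for the system $(\pi_1, S_1, T_1, W_1)$. Since this system is assumed linearly minimal, it is therefore a principle dilation. The canonical dilation $(\pi_c, S, T, W)$ is also a principle dilation, as already established. Two principle dilations of the same linear system are equivalent by Theorem \ref{equivalent}, so $\pi_1$ is equivalent to $\pi_c$. Note that this direction uses no finite-dimensionality.

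For part (ii), I read the hypothesis as saying only that $\pi_1$ and $\pi_c$ are conjugate as homomorphisms, i.e. there is a bijective linear map $R: W_1 \rightarrow W$ with $\pi_1(a) = R^{-1}\pi_c(a)R$ for all $a$; in particular $dim W_1 = dim W$. I would argue by contradiction. Suppose $ker(S_1)$ contains a nonzero $\pi_1$-invariant subspace, and let $K$ be the maximal such subspace, which is then nonzero. As noted in the discussion of reduced dilations, the reduced system $(\tilde{\pi}_1, \tilde{S}_1, \tilde{T}_1, \tilde{W}_1)$ with $\tilde{W}_1 = W_1/K$ is irreducible; it also inherits linear minimality, since the quotient map carries the spanning set $\pi_1(\mathcal{A})T_1 V$ onto $\tilde{\pi}_1(\mathcal{A})\tilde{T}_1 V$. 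Hence the reduced system is a principle dilation. Applying Theorem \ref{equivalent} to it and to the canonical dilation gives $dim \tilde{W}_1 = dim W$, while conjugacy gives $dim W_1 = dim W$; thus $dim \tilde{W}_1 = dim W_1$. But in finite dimensions $dim \tilde{W}_1 = dim W_1 - dim K < dim W_1$ because $K \neq 0$, a contradiction.

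The main obstacle will be pinning down what ``equivalent'' means in part (ii) and locating exactly where finiteness enters. If one reads ``equivalent'' in the full sense of the Definition (including $RT_1 = T$ and $SR = S_1$), then invariant subspaces transfer directly: any nonzero $\pi_1$-invariant $M \subseteq ker(S_1)$ would map under $R$ to a nonzero $\pi_c$-invariant subspace of $ker(S)$, contradicting the irreducibility of the canonical dilation, and no dimension hypothesis would be needed. It is precisely because the weaker conjugacy hypothesis relates $\pi_1$ to $\pi_c$ but says nothing linking $S_1$ to $S$ that one is forced into the dimension count through Theorem \ref{equivalent}, and it is there that $dim W_1 < \infty$ becomes indispensable.
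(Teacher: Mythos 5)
Your proposal is correct and follows essentially the same route as the paper: part (i) is the observation that the hypothesis is irreducibility, so the system is principle and Theorem \ref{equivalent} applies; part (ii) is the same dimension count via the reduced system $W_1/K$ and Theorem \ref{equivalent}, with finiteness used to conclude $\dim K = 0$. Your closing remark correctly identifies why the weaker reading of ``equivalent'' (conjugacy of homomorphisms only) is what forces the dimension argument and the finiteness hypothesis, which is consistent with how the paper's proof actually uses the hypothesis.
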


\begin{proof}  (i) If $ker (S_{1})$ does not contain any nonzero $\pi_{1}$-invariant subspaces, then by definition it is a principle dilations and hence is equivalent to $\pi_{c}$ by Theorem \ref{equivalent}.

(ii) Assume that $\pi_{1}$ is equivalent to the canonical homomorphism dilation $\pi_{c}$. Then $dim (W_{1}) = dim W$, where $W$ is the dilation space for the canonical dilation. Let $K$ be the largest $\pi$-invariant subspace contained in $ker (S_{1})$ and let $(\tilde{\pi_{1}}, \tilde{S_{1}}, \tilde{T_{1}}, W_{1}/K)$ be the reduced homomorphism dilation system. Then, by Theorem \ref{equivalent} again, $\tilde{\pi_{1}}$ and $\pi_{c}$ are equivalent homomorphisms, and so we get $dim (W) = dim (W_{1}/K)$. This implies that $dim (W_{1}) = dim (W_{1}/K)$. Thus $dim K = 0$ since $dim W_{1} < \infty$. Therefore $ker(S_{1})$ does not contain any nonzero $\pi$-invariant subspaces.
\end{proof}

\begin{remark} We don't know if (ii) is still true when $dim W_{1}$ is not finite dimensional.
\end{remark}

The term of ``universal dilation" is justified by the following:

\begin{theorem} \label{structure-1}  Any linearly minimal homomorphism dilation of a linear system $(\varphi, V)$ is equivalent to a reduced  homomorphism dilation system of its universal dilation.
\end{theorem}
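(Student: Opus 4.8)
The plan is to realize the given dilation as an explicit image of the universal dilation and to identify the kernel of the realizing map as the reducing subspace. Let $(\pi_1, S_1, T_1, W_1)$ be an arbitrary linearly minimal homomorphism dilation of $(\varphi, V)$, and let $(\pi_u, S, T, \mathcal{A}\otimes V)$ be the universal dilation. The assignment $(a,x)\mapsto \pi_1(a)T_1 x$ is bilinear on $\mathcal{A}\times V$, so the universal property of the tensor product furnishes a well-defined linear map $R:\mathcal{A}\otimes V\to W_1$ determined by $R(a\otimes x)=\pi_1(a)T_1 x$. This is precisely the step where the ``largest'' nature of $\mathcal{A}\otimes V$ is used: the free bilinear structure guarantees that $R$ exists with no compatibility conditions to check.

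First I would verify the three intertwining identities. Since $\pi_1$ is a unital homomorphism, $R\pi_u(a)(b\otimes x)=R(ab\otimes x)=\pi_1(ab)T_1 x=\pi_1(a)\pi_1(b)T_1 x=\pi_1(a)R(b\otimes x)$, so $R\pi_u(a)=\pi_1(a)R$ for all $a\in\mathcal{A}$; moreover $RTx=R(I\otimes x)=\pi_1(I)T_1 x=T_1 x$, so $RT=T_1$; and $S_1R(a\otimes x)=S_1\pi_1(a)T_1 x=\varphi(a)x=S(a\otimes x)$, so $S_1R=S$. Because the target dilation is linearly minimal, $W_1=\mathrm{span}\{\pi_1(\mathcal{A})T_1 V\}=R(\mathcal{A}\otimes V)$, which shows that $R$ is surjective.

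Next I would set $K=\ker R$ and check that it is exactly the type of reducing subspace entering the construction of a reduced dilation. If $w\in K$ then $Sw=S_1Rw=0$, so $K\subseteq\ker S$; and $R(\pi_u(a)w)=\pi_1(a)Rw=0$, so $\pi_u(a)K\subseteq K$, i.e.\ $K$ is $\pi_u$-invariant. Thus $K$ is a $\pi_u$-invariant subspace of $\ker S$, and the surjection $R$ descends to a linear bijection $\tilde R:(\mathcal{A}\otimes V)/K\to W_1$. Writing $(\tilde\pi_u,\tilde S,\tilde T,(\mathcal{A}\otimes V)/K)$ for the reduced dilation associated with $K$, the identities above pass to the quotient to give $\tilde R\tilde T=T_1$ and $\tilde R\tilde\pi_u(a)=\pi_1(a)\tilde R$, equivalently $\tilde\pi_u(a)=\tilde R^{-1}\pi_1(a)\tilde R$. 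Applying Proposition \ref{lemma-1} with the reduced universal dilation in the role of the first system and $(\pi_1,S_1,T_1,W_1)$ in the role of the second, the remaining synthesis relation $S_1\tilde R=\tilde S$ is automatic, so the given dilation is equivalent to the reduced dilation of $(\pi_u,S,T,\mathcal{A}\otimes V)$ determined by $K$.

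I expect no serious obstacle, as the entire argument is forced once the map $R(a\otimes x)=\pi_1(a)T_1 x$ is written down; the only genuinely delicate points are bookkeeping. One must invoke the universal property to be sure $R$ is well defined on $\mathcal{A}\otimes V$, and one must treat the degenerate case $K=\{0\}$ separately, since the reduced-dilation construction in the text requires a nonzero invariant subspace. In that case $R$ is already a bijection, and the given dilation is equivalent to the universal dilation itself, viewed as its own trivial reduction; in all other cases the reduction by $K=\ker R\neq\{0\}$ supplies the required equivalence.
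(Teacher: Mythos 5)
Your proof is correct and follows essentially the same route as the paper: the subspace $K=\ker R$ you construct is exactly the paper's reduced subspace $K=\{\sum_i c_i a_i\otimes x_i:\sum_i c_i\pi_1(a_i)T_1x_i=0\}$, and the induced bijection on the quotient together with Proposition \ref{lemma-1} gives the equivalence just as in the text. If anything, you are slightly more careful than the paper on two bookkeeping points — invoking the universal property of the tensor product to justify that $R$ is well defined on $\mathcal{A}\otimes V$ (which the paper leaves implicit when it asserts $R[w]$ is well defined ``by the definition of $K$''), and flagging the degenerate case $K=\{0\}$, which the paper's definition of a reduced dilation technically excludes.
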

\begin{proof} Let $(\pi_{1}, S_{1}, T_{1}, W_{1})$ be a linearly minimal dilation system.  Define $K$ by
$$
K = \{w = \sum_{i}c_{i}a_{i}\otimes x_{i}: \sum_{i}c_{i}\pi_{1}(a_{i})T_{1}x_{i}= 0\}.
$$
Claim: $K$ is a $\pi_{u}$-invariant subspace contained in $ker (S)$. In fact, if  $w = \sum_{i}c_{i}a_{i}\otimes x_{i}\in K$, then
\begin{eqnarray*}
Sw &=& \sum_{i}c_{i}S(a_{i}\otimes x_{i}) = \sum_{i}c_{i}S\pi_{u}(a_{i})Tx\\
& =& \sum_{i}c_{i}\varphi(a_{i}) x_{i} =  \sum_{i}c_{i}S_{1}\pi_{1}(a_{i})T_{1}x_{i}  \\
&=& S_{1}\sum_{i}c_{i}\pi_{1}(a_{i})T_{1}x_{i} = S(0) = 0 .
\end{eqnarray*}
Thus $K\subseteq ker (S)$. Moreover, for any $a\in \mathcal{A}$ and $w = \sum_{i}c_{i}a_{i}\otimes x_{i}\in K$, we have
$$
 \sum_{i}c_{i}\pi_{1}(aa_{i})T_{1}x_{i} = \pi_{1}(a) \sum_{i}c_{i}\pi_{1}(a_{i})T_{1}x_{i} = 0.
 $$
 Thus $\pi_{u}(a)w =  \sum_{i}c_{i}(aa_{i})\otimes x_{i} \in K$. Therefore $K$ is a $\pi_{u}$-invariant subspace contained in $ker (S)$.

 Let $(\tilde{\pi_{u}}, \tilde{S}, \tilde{T}, W/K)$ be the reduced dilation homomorphism.   Define $R: W/K \rightarrow W_{1}$ by
 $$
 R[w] = \sum_{i}c_{i}\pi_{1}(a_{i})T_{1}x_{i}
 $$
 for any $[w]\in W/K$ represented by $w = \sum_{i}c_{i}a_{i}\otimes x_{i}$ . Then, by the definition of $K$, we have $R[w] =  \sum_{i}c_{i}a_{i}\otimes x_{i} = 0$ if and only if $w\in K$. Hence, $R$ is a well defined injective linear map. Clearly it is also surjective since $span\{\pi_{1}(\mathcal{A})T_{1}V\} = W_{1}$. Moreover, for any $w = \sum_{i}c_{i}a_{i}\otimes x_{i}\in W$, we have
 \begin{eqnarray*}
\pi_{1}(a)R([w]) &=& \pi_{1}(a)\sum_{i}c_{i}\pi_{1}(a_{i})T_{1}x_{i} = \sum_{i}c_{i}\pi_{1}(aa_{i})T_{1}x_{i}\\
& = & R([\sum_{i}c_{i}(aa_{i}) \otimes x_{i}] = R\tilde{\pi_{u}}(a)([w])
\end{eqnarray*}
Thus $\pi_{1}(a) = R\tilde{\pi_{u}}(a)R^{-1}$ for any $a\in\mathcal{A}$. Moreover, for any $w = \sum_{i}c_{i}a_{i}\otimes x_{i}\in W$  we have
$$
R\tilde{T}x = R[Tx] = R[I\otimes x] = \pi_{1}(I)T_{1}x.
$$
Hence $R\tilde{T} = T_{1}$. Therefore $(\pi_{1}, S_{1}, T_{1}, W_{1})$ and $(\tilde{\pi_{u}}, \tilde{S}, \tilde{T}, W/K)$ are equivalent.
\end{proof}

In order to classify the linearly minimal homomorphism dilation systems we introduce the following:

\begin{definition} Let  $(\pi_{u}, S, T, W)$ be the universal dilation system  and $(\pi_{1}, S_{1}, T_{1}, W_{1})$ be a linearly minimal homomorphism dilation system for a linear system $(\varphi, V)$. Then the $\pi_{u}$-invariant subspace $K_{1}$ introduced in the above proof will be called the {\it reduced  subspace} associated with  $(\pi_{1}, S_{1}, T_{1}, W_{1})$.
\end{definition}

\begin{remark} We point out that the reduced subspace $K$ of a  linearly minimal  homomorphism dilation system $(\pi_{1}, S_{1}, T_{1}, W_{1})$ is different from the maximal $\pi_{1}$-invariant subspace  $M$ contained in $ker(S_{1})$ which is used to reduce  $(\pi_{1}, S_{1}, T_{1}, W_{1})$ to the ``smallest" dilation --- the principle dilation, while $K$ is a $\pi_{u}$-invariant subspace contained in the universal dilation space $W$ (i.e.,  $\mathcal{A}\otimes V$) that is used to reduce the universal dilation system to $(\pi_{1}, S_{1}, T_{1}, W_{1})$.

\end{remark}

The following gives us a classification of all linearly minimal homomorphism dilations systems for a given linear system. 

\begin{theorem} \label{class-1}  Let $K_{1}$ and $K_{2}$ be the reduced subspaces associated with the linearly minimal homomorphism dilation systems $(\pi_{1}, S_{1}, T_{1}, W_{1})$ and $(\pi_{2}, S_{2}, T_{2}, W_{2})$, respectively. Then the two homomorphism dilation systems $(\pi_{1}, S_{1}, T_{1}, W_{1})$ and $(\pi_{2}, S_{2}, T_{2}, W_{2})$ are equivalent if and only if $K_{1} =K_{2}$.
\end{theorem}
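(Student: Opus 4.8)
The plan is to prove the two directions of the equivalence separately, exploiting the fact that each linearly minimal dilation is pinned down, up to equivalence, by its reduced subspace inside the universal dilation space $W = \mathcal{A}\otimes V$ (Theorem \ref{structure-1}).

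\textbf{The easy direction} ($K_1 = K_2 \Rightarrow$ equivalence). Here I would simply invoke transitivity of the equivalence relation. By Theorem \ref{structure-1}, $(\pi_1, S_1, T_1, W_1)$ is equivalent to the reduced dilation $(\widetilde{\pi_u}, \widetilde{S}, \widetilde{T}, W/K_1)$ of the universal dilation associated with $K_1$, and likewise $(\pi_2, S_2, T_2, W_2)$ is equivalent to the reduced dilation associated with $K_2$. If $K_1 = K_2$, these two reduced dilations of the universal system are literally the same object (same quotient space $W/K_1 = W/K_2$, same induced maps $\widetilde{\pi_u}, \widetilde{S}, \widetilde{T}$). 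So $(\pi_1, S_1, T_1, W_1)$ and $(\pi_2, S_2, T_2, W_2)$ are each equivalent to a common third system, and I would need to note that the equivalence relation is genuinely transitive and symmetric; this is immediate since composing bijective intertwiners $R$ again yields a bijective intertwiner satisfying the two defining conditions $RT = T'$ and $\pi' = R\pi R^{-1}$, so Proposition \ref{lemma-1} applies to the composite.

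\textbf{The harder direction} (equivalence $\Rightarrow K_1 = K_2$). Suppose $R: W_1 \to W_2$ is a bijective linear map with $RT_1 = T_2$ and $\pi_1(a) = R^{-1}\pi_2(a)R$ for all $a$. I want to show the two reduced subspaces coincide as subsets of $W = \mathcal{A}\otimes V$. Recall from the proof of Theorem \ref{structure-1} that
\[
K_j = \Bigl\{ \sum_i c_i\, a_i \otimes x_i \in W : \sum_i c_i\, \pi_j(a_i) T_j x_i = 0 \Bigr\}, \quad j = 1, 2.
\]
The key computation is that $R$ transports the ``evaluation'' map of system $1$ to that of system $2$: for any element $w = \sum_i c_i\, a_i \otimes x_i \in W$,
\[
R\Bigl( \sum_i c_i\, \pi_1(a_i) T_1 x_i \Bigr) = \sum_i c_i\, R\pi_1(a_i) T_1 x_i = \sum_i c_i\, \pi_2(a_i) R T_1 x_i = \sum_i c_i\, \pi_2(a_i) T_2 x_i,
\]
where the middle equality uses $R\pi_1(a_i) = \pi_2(a_i)R$ and the last uses $RT_1 = T_2$. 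Since $R$ is injective, the left-hand expression $\sum_i c_i\, \pi_1(a_i) T_1 x_i$ vanishes if and only if the right-hand expression $\sum_i c_i\, \pi_2(a_i) T_2 x_i$ vanishes. Reading this through the defining conditions for $K_1$ and $K_2$, we get $w \in K_1 \iff w \in K_2$, that is, $K_1 = K_2$.

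I expect the main (though still modest) obstacle to be purely bookkeeping: making sure the representation $w = \sum_i c_i\, a_i \otimes x_i$ is handled correctly, since a single element of $\mathcal{A}\otimes V$ admits many such representations. The point is that $K_j$ is defined as a condition on elements of $W$, and the intertwining computation above respects the linear structure of $W$, so it is consistent across representations; I would phrase the argument at the level of the well-defined linear maps $w \mapsto \sum_i c_i\, \pi_j(a_i) T_j x_i$ (which are exactly the synthesis-type maps whose kernels are $K_j$) rather than at the level of individual summands. With that framing, injectivity of $R$ does all the real work, and the proof is short.
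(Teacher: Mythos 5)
Your proposal is correct and follows essentially the same route as the paper's own proof: the forward direction is reduced to Theorem \ref{structure-1} (the paper likewise treats it as immediate from that theorem, leaving transitivity of equivalence implicit where you spell it out), and the reverse direction is exactly the paper's computation, which writes $\sum_i c_i\,\pi_1(a_i)T_1x_i = R^{-1}\sum_i c_i\,\pi_2(a_i)T_2x_i$ and invokes bijectivity of $R$ to conclude $w\in K_1 \iff w\in K_2$. Your added remark about well-definedness across different tensor representations of $w$ is a reasonable point of care that the paper passes over silently.
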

\begin{proof} By Theorem \ref{structure-1} we only need to prove that if $(\pi_{1}, S_{1}, T_{1}, W_{1})$ and $(\pi_{2}, S_{2}, T_{2}, W_{2})$ are equivalent, then $K_{1} =K_{2}$.  Let $R: W_{1} \rightarrow W_{2}$  be a bijective linear map such that $\pi_{2}(a)R = R\pi_{1}(a)$ for all $a\in\mathcal{A}$, $S_{2}R = S_{1}$ and $RT_{1} = T_{2}$.

Let $w = \sum_{i}c_{i}a_{i}\otimes x_{i}\in W$.  Since $$\sum_{i}c_{i}\pi_{1}(a_{i})T_{1}x_{i} = R^{-1}\sum_{i}c_{i}\pi_{2}(a_{i})RT_{1}x_{i} = R^{-1}\sum_{i}c_{i}\pi_{2}(a_{i})T_{2}x_{i},$$ we get that $\sum_{i}c_{i}\pi_{i}(a_{i})x_{i} = 0$ if and only if $\sum_{i}c_{i}\pi_{2}(a_{i})T_{2}x_{i} = 0$, i.e., $w\in K_{1}$ if and only if $w\in K_{2}$. Hence $K_{1} = K_{2}$.
\end{proof}

The above theorem shows that the equivalent class of linearly minimal homomorphism dilation systems is uniquely determined by the reduced subspace.We will show by example in section 4 that there could be infinitely many inequivalent linearly minimal homomorphism dilation systems even in the finite-dimensional case (i.e., $dimV < \infty$ and $dim(\mathcal{A}) < \infty$). Additionally,  there is a weaker version of equivalence which seems also relevant to the dilation theory: If $(\pi_{1}, S_{1}, T_{1}, W_{1})$ be a linearly minimal dilation system for a linear system $(\varphi, V)$, and $\pi_{2}$ is a homomorphsim from $\mathcal{A}$ to $L(W_{2})$ such that $\pi_{1}$ and $\pi_{2}$ are equivalent in the usual sense, i.e. $\pi_{1}(a) = R^{-1}\pi_{2}(a)R$ $(\forall a\in\mathcal{A})$ for some isomorphism $R: W_{1}\rightarrow W_{2}$, then $(\pi_{2}, S_{2}, T_{2}, W_{2})$ is an equivalent dilation system with $S_{2} = S_{1}R^{-1}$ and $T_{2}= RT_{1}$. Thus it is interesting to known that under what condition do we have two equivalent homomorphisms $\pi_{1}$ and $\pi_{2}$ for linearly minimal homomorphism dilation systems  $(\pi_{1}, S_{1}, T_{1}, W_{1})$ and $(\pi_{2}, S_{2}, T_{2}, W_{2})$. For this purpose we introduce the following concept of equivalence  for the  reducing invariant subspaces.

\begin{definition} Let $\pi_{u}, S, T, W)$ be  the universal dilation system of a linearly system $(\varphi, V)$. 
Two $\pi_{u}$-invariant subspaces $K_{1}$ and $K_{2}$ of $ker (S)$ are called {\it strongly isomorphic } if there is an isomorphism $R: W\rightarrow W$ such that $R(K_{1}) = K_{2}$ and $\pi_{u}(a)Rw- R\pi_{u}(a)w \in K_{2}$ for all $a\in\mathcal{A}$ and all $w\in W$, i.e., the quotient maps of $\pi_{u}(a)$ and $R$ on $W/K_{2}$ commute for all $a\in\mathcal{A}$.

\end{definition}

\begin{theorem} \label{class-2} Let $K_{1}$ and $K_{2}$ be the reduced subspaces for the linearly minimal homomorphism dilation systems $(\pi_{1}, S_{1}, T_{1}, W_{1})$ and $(\pi_{2}, S_{2}, T_{2}, W_{2})$, respectively. Then $\pi_{1}$ and $\pi_{2}$ are equivalent if and only if $K_{1}$ and $K_{2}$ are strongly isomorphic.
\end{theorem}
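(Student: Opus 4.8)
The plan is to transport everything to the universal dilation and to compare the homomorphisms induced on the two quotient spaces $W/K_1$ and $W/K_2$, where $(\pi_u, S, T, W)$ is the universal dilation of $(\varphi,V)$. By Theorem~\ref{structure-1} and the explicit map built in its proof, for $i=1,2$ there is a bijective linear map $R_i : W/K_i \rightarrow W_i$ with $R_i\tilde{T}=T_i$ and $\pi_i(a) R_i = R_i \tilde\pi_u^{(i)}(a)$ for all $a \in \mathcal{A}$, where $\tilde\pi_u^{(i)}$ denotes the homomorphism that $\pi_u$ induces on $W/K_i$. Thus $\pi_i$ and $\tilde\pi_u^{(i)}$ are equivalent homomorphisms, and proving the theorem reduces to showing that $\tilde\pi_u^{(1)}$ and $\tilde\pi_u^{(2)}$ are equivalent (as homomorphisms on $W/K_1$ and $W/K_2$) exactly when $K_1$ and $K_2$ are strongly isomorphic.

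Next I would reformulate the strong isomorphism condition in terms of the quotients. Let $q_i : W \rightarrow W/K_i$ be the quotient maps. If $R : W \rightarrow W$ is an isomorphism with $R(K_1) = K_2$, then $R$ descends to an isomorphism $\bar R : W/K_1 \rightarrow W/K_2$ satisfying $\bar R q_1 = q_2 R$. Applying $q_2$ to the defining relation $\pi_u(a) R w - R \pi_u(a) w \in K_2$ and using $q_2 \pi_u(a) = \tilde\pi_u^{(2)}(a) q_2$ (valid since $K_2$ is $\pi_u$-invariant) together with $q_2 R = \bar R q_1$ and $q_1 \pi_u(a) = \tilde\pi_u^{(1)}(a) q_1$, one obtains $\bigl(\tilde\pi_u^{(2)}(a)\bar R - \bar R \tilde\pi_u^{(1)}(a)\bigr) q_1 w = 0$ for all $w$. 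Since $q_1$ is surjective, the strong isomorphism condition is equivalent to the single statement that $\bar R$ intertwines the induced homomorphisms, i.e. $\tilde\pi_u^{(2)}(a)\bar R = \bar R \tilde\pi_u^{(1)}(a)$ for all $a \in \mathcal{A}$.

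With this reformulation both implications become essentially formal. For the direction ``strongly isomorphic $\Rightarrow$ equivalent'', given $R$ (hence $\bar R$) as above, I would set $Q := R_2 \bar R R_1^{-1} : W_1 \rightarrow W_2$ and check directly, using the intertwining relations for $R_1$, $R_2$, and $\bar R$, that $\pi_2(a) Q = Q \pi_1(a)$, so that $\pi_1$ and $\pi_2$ are equivalent. For the converse, if $Q : W_1 \rightarrow W_2$ is an isomorphism with $\pi_2(a) Q = Q \pi_1(a)$, I would set $\bar R := R_2^{-1} Q R_1 : W/K_1 \rightarrow W/K_2$; the same computation shows that $\bar R$ intertwines $\tilde\pi_u^{(1)}$ and $\tilde\pi_u^{(2)}$. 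It then remains to lift $\bar R$ to an isomorphism $R : W \rightarrow W$ with $R(K_1) = K_2$ that descends to $\bar R$, since by the reformulation any such lift automatically satisfies the strong isomorphism condition.

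The main obstacle is precisely this lifting step. Choosing algebraic complements $W = K_1 \oplus C_1 = K_2 \oplus C_2$, the restrictions $q_i|_{C_i} : C_i \rightarrow W/K_i$ are isomorphisms, so $\bar R$ induces an isomorphism $C_1 \rightarrow C_2$, and one can assemble $R$ by combining this with any linear isomorphism $K_1 \rightarrow K_2$. The only point requiring justification is the existence of such an isomorphism $K_1 \rightarrow K_2$; since $\bar R$ already yields $W/K_1 \cong W/K_2$, this reduces to the equality $\dim K_1 = \dim K_2$, which is immediate when $\mathcal{A}$ and $V$ are finite dimensional, and which I would handle with care in the general case via $\dim K_i + \dim(W/K_i) = \dim W$.
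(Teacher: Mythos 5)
Your proposal follows essentially the same route as the paper's proof: reduce to the universal dilation via Theorem \ref{structure-1}, observe that strong isomorphism of $K_1$ and $K_2$ is precisely the statement that the induced map $\bar R$ on the quotients intertwines the induced homomorphisms, and in the converse direction lift the intertwining isomorphism of quotients to an isomorphism of $W$ carrying $K_1$ onto $K_2$. You are in fact more explicit than the paper about the lifting step (the paper simply asserts $\dim K_1 = \dim K_2$ and the existence of the lift), and your caution there is warranted, since in the infinite-dimensional case $W/K_1 \cong W/K_2$ does not by itself force $\dim K_1 = \dim K_2$.
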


\begin{proof} By Theorem \ref{structure-1} we can assume that $(\pi_{i}, S_{i}, T_{i}, W_{i})$ is the reduced homomorphism dilation of the universal dilation associated with $K_{i} (i =1, 2)$.

$(\Leftarrow)$:  Assume that $K_{1}$ and $K_{2}$ are strongly isomorphic. Then there is an isomorphism $R: W\rightarrow W$ such that $R(K_{1}) = K_{2}$ and $\pi_{u}(a)Rw- R\pi_{u}(a)w \in K_{2}$ for all $a\in\mathcal{A}$ and all $w\in W$. Let $\tilde{R}: W_{1} = W/K_{1} \rightarrow W/K_{2} = W_{2}$ be defined by
$$
\tilde{R}[w] = [Rw], \  \  \ w\in W,
$$
where we use $[\cdot ]$ to denote the element in the corresponding quotient space. Then $\tilde{R}$ is a bijective linear transformation. Note that since $\pi_{2}$ is the reduced homomorphism of $\pi_{u}$ on $W/K_{2}$, we have that $\pi_{2}(a)\tilde{R}([w]) = \pi_{2}(a)[Rw] = [\pi_{u}(a)Rw].$ Similarly, $\tilde{R}\pi_{1}(a)[w] = \tilde{R}[\pi_{u}(a)w] = [R\pi_{u}(a)w]$. Thus, from  $\pi_{u}(a)Rw- R\pi_{u}(a)w \in K_{2}$, we obtain that $\pi_{2}(a)\tilde{R}[w] = \tilde{R}\pi_{1}(a)[w]$, which implies that $\pi_{1}$ and $\pi_{2}$ are equivalent.

$(\Rightarrow)$:  Assume that $\pi_{1}$ and $\pi_{2}$ are equivalent.Then there is bijective linear map $L: W/K_{1} \rightarrow W/K_{2}$ such that
$\pi_{2}(a)L = L\pi_{1}(a)$ for all $a\in\mathcal{A}$. Since  $dim (K_{1}) = dim (K_{2})$, we obtain that there exists a bijective linear map $R: W \rightarrow W$ such that
the $R(K_{1}) = K_{2}$ and the induced quotient map $\tilde{R}$ is $L$. Moreover, from $\pi_{2}(a)L = L\pi_{1}(a)$ we have that $\pi_{2}(a)\tilde{R}= \tilde{R}\pi_{1}(a)$, which is equivalent to the condition that $\pi_{u}(a)Rw- R\pi_{u}(a)w \in K_{2}$ for all $a\in\mathcal{A}$ and all $w\in W$. Thus $K_{1}$ and $K_{2}$ are strongly isomorphic.
\end{proof}

\section{Remarks and Examples}

Theorem \ref{class-1} and Theorem \ref{class-2} provide us with two classifications for linearly minimal  homomorphism dilations based on the universal dilation invariant subspaces in the kernel of the map $S: \mathcal{A}\otimes V \rightarrow V$ defined by $S(a\otimes x) = \varphi(a)x$.
 These lead to many interesting questions, especially in the finite dimensional case.  For example, (1) under what condition on $(\varphi, \mathcal{A}, V)$ do we have the property that for every  $k$ between the dimensions of $V$ and $\mathcal{A}\otimes V$ there exists a linearly minimal dilation with dilation dimension $k$. (2) When do we have only finite many inequivalent linearly minimal homomorphism dilations? (Examples 4.5 and 4.7 show that we could have infinitely many inequivalent classes even both $\mathcal{A}$ and $V$ are finite dimensional.)  (3) Under what condition do we have that the principle and universal dilations are the only two classes of linearly minimal dilations? (4) We will construct an example showing that there exist reduced subspaces  $K_{1}$ and $K_{2}$  that are strongly isomorphic by $K_{1} \neq K_{2}$. However, it would be interesting to know that if the condition $dim K_{1} = dim K_{2}$ automatically implies that they are strongly isomorphic.

 In what follows we will answer some of these questions and at the same time constructing some examples showing the complexity of other questions.

 Let $M = \{\sum_{i}c_{i}a_{i}x_{i}: \sum_{i}c_{i}\varphi(aa_{i})x_{i}=0, \forall a\in \mathcal{A}\}$. Then $M$ is the largest $\pi_{u}$-invariant subspace contained in $ker(S)$. Hence, by Theorem \ref{equivalent} we have that the universal homomorphism dilation equivalent to the principle dilation if and only if $M =  \{0\}$. Moreover we  have

 \begin{proposition} A linear system  $(\varphi, V)$  has only one equivalent class of linearly minimal homomorphism dilations if and only if $M = \{0\}$.
  \end{proposition}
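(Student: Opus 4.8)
The plan is to establish a bijective correspondence between the equivalence classes of linearly minimal homomorphism dilations of $(\varphi, V)$ and the $\pi_{u}$-invariant subspaces of $ker(S)$, and then to read off the proposition from the fact that $M$ is the \emph{largest} such subspace.

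First I would record the two halves of this correspondence. By Theorem \ref{structure-1}, every linearly minimal dilation is equivalent to a reduced dilation $(\tilde{\pi_{u}}, \tilde{S}, \tilde{T}, W/K)$ of the universal dilation, where $K$ is its associated reduced subspace, and such a $K$ is always a $\pi_{u}$-invariant subspace of $ker(S)$. By Theorem \ref{class-1}, two linearly minimal dilations are equivalent precisely when their reduced subspaces coincide. It remains to check that every $\pi_{u}$-invariant subspace $K\subseteq ker(S)$ actually occurs as a reduced subspace. For this I would take the reduced dilation $(\tilde{\pi_{u}}, \tilde{S}, \tilde{T}, W/K)$ itself, which is genuinely linearly minimal because the quotient map carries $span\{\pi_{u}(a)Tx\} = W$ onto $W/K$, and compute its associated reduced subspace. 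Since $\tilde{\pi_{u}}(a)\tilde{T}x = [\pi_{u}(a)Tx] = [a\otimes x]$, for $w = \sum_{i}c_{i}a_{i}\otimes x_{i}$ we have $\sum_{i}c_{i}\tilde{\pi_{u}}(a_{i})\tilde{T}x_{i} = [w]$, which vanishes in $W/K$ exactly when $w\in K$. Hence the reduced subspace of $W/K$ is $K$ itself, so the assignment $K\mapsto (\tilde{\pi_{u}}, \tilde{S}, \tilde{T}, W/K)$ realizes every such $K$, with injectivity up to equivalence supplied by Theorem \ref{class-1}.

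With this correspondence in hand the proposition is immediate. The zero subspace is always a $\pi_{u}$-invariant subspace of $ker(S)$, corresponding to the universal dilation $W/\{0\} = W$, while $M$ is the maximal one, corresponding to the principle dilation $W/M$. If $M = \{0\}$, then every $\pi_{u}$-invariant subspace $K\subseteq ker(S)$ satisfies $K\subseteq M = \{0\}$, so $K = \{0\}$ is the only reduced subspace and there is exactly one equivalence class. Conversely, if $M\neq \{0\}$, then $\{0\}$ and $M$ are distinct reduced subspaces, so by Theorem \ref{class-1} the universal and the principle dilations are inequivalent, giving at least two classes.

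The argument is largely a consolidation of the earlier structural theorems, so I do not expect a serious obstacle. The one point requiring genuine verification, rather than mere citation, is the surjectivity of the correspondence, that is, the short computation identifying the reduced subspace of $W/K$ with $K$; everything else follows from Theorems \ref{structure-1} and \ref{class-1} together with the already-noted maximality of $M$.
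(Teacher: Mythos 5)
Your proof is correct and follows essentially the same route as the paper: both reduce the statement to the facts that every reduced subspace is a $\pi_{u}$-invariant subspace of $\ker(S)$ (hence contained in $M$) and that equivalence classes are determined by reduced subspaces via Theorem \ref{class-1}. If anything, your write-up is slightly more complete than the paper's, which only argues the forward direction explicitly and leaves the converse to the remark preceding the proposition; your verification that the reduced subspace of $W/K$ is $K$ itself (so that $\{0\}$ and $M$ are genuinely realized by inequivalent dilations when $M\neq\{0\}$) is a worthwhile addition.
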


  \begin{proof} Let $(\pi_{1}, S_{1}, T_{1}, W_{1})$ be a linearly minimal dilation homomorphism system for $(\varphi, V)$. Let $K_{1}$ be its reduced subspace. If $w = \sum_{i}c_{i}a_{i}x_{i}\in K_{1}$, then $\sum_{i}c_{i}\phi_{1}(aa_{i})T_{1}x_{i} = 0$ for every $a\in\mathcal{A}$. Since $\varphi(\cdot) = S_{1}\pi_{1}(\cdot)T_{1}$, we get that $\sum_{i}c_{i}\varphi(aa_{i})x_{i} = 0$ for all $a\in \mathcal{A}\}$, i.e., $w\in M$. Thus $K_{1} = \{0\}$, and so  $(\pi_{1}, S_{1}, T_{1}, W_{1})$ is equivalent to the universal dilation.
    \end{proof}

\begin{corollary} Let $(\varphi, \mathcal{A}, V)$ be a linear system. If $ker(\varphi)$ contains a proper left ideal, then the universal dilation is not equivalent to its principle dilation.
\end{corollary}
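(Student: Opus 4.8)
The plan is to reduce everything to the criterion established immediately above the last displayed proposition: the universal dilation is equivalent to the principle dilation precisely when the maximal $\pi_{u}$-invariant subspace $M \subseteq \ker(S)$ vanishes. Consequently it suffices to manufacture a single nonzero element of $M$ out of the given left ideal, and the proof reduces to showing $M \neq \{0\}$.

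First I would fix a nonzero element $b$ of the left ideal $L \subseteq \ker(\varphi)$ (here the word \emph{proper} is what guarantees $L \neq \{0\}$; note that unitality of $\varphi$ already forces $\ker(\varphi) \neq \mathcal{A}$ whenever $V \neq \{0\}$, so the operative content of the hypothesis is nonvanishing of $L$) together with any nonzero vector $x \in V$, and consider the elementary tensor $b \otimes x \in W = \mathcal{A} \otimes V$. Since $b \neq 0$ and $x \neq 0$, this is a nonzero element of the tensor product.

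Next I would check that $b \otimes x$ actually lies in $M$. Unwinding the definition $M = \{\sum_{i}c_{i}a_{i}\otimes x_{i} : \sum_{i}c_{i}\varphi(aa_{i})x_{i}=0,\ \forall a\in\mathcal{A}\}$, membership of the single-term element $b \otimes x$ amounts to the requirement $\varphi(ab)x = 0$ for every $a \in \mathcal{A}$. But $L$ is a left ideal containing $b$, so $ab \in L$ for all $a \in \mathcal{A}$; and $L \subseteq \ker(\varphi)$, whence $\varphi(ab) = 0$, and in particular $\varphi(ab)x = 0$. Therefore $b \otimes x$ is a nonzero element of $M$, so $M \neq \{0\}$, and the stated criterion then forces the universal dilation to be inequivalent to its principle dilation.

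I do not anticipate a genuine obstacle here: the whole point is that the left-ideal property $\mathcal{A}b \subseteq L$ lets a single generator $b$ produce a $\pi_{u}$-invariant vector in $M$ directly, without needing to assemble linear combinations, so the only points demanding care are the two nondegeneracy conditions, $b \neq 0$ (the force of assuming the ideal is nonzero) and $x \neq 0$ (valid since $V \neq \{0\}$ for any nontrivial linear system). If one wished to bypass the phrase ``universal not equivalent to principle iff $M=\{0\}$'' entirely, one could instead invoke Theorem~\ref{class-1}: the reduced subspace of the universal dilation is $\{0\}$ while that of its principle dilation is $M$, so $M \neq \{0\}$ immediately yields inequivalence.
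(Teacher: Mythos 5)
Your proof is correct and follows essentially the same route as the paper's: pick a nonzero element $b$ of the left ideal and a nonzero $x\in V$, observe that $\varphi(ab)x=0$ for all $a\in\mathcal{A}$ by the left-ideal property, conclude $b\otimes x$ is a nonzero element of $M$, and invoke the criterion that equivalence of the universal and principle dilations is the same as $M=\{0\}$. The extra remarks on nondegeneracy and the alternative via Theorem~\ref{class-1} are fine but not needed.
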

\begin{proof} Let $a$ be a nonzero element in the left ideal. Then for any $x\in V$ and any $b\in \mathcal{A}$ we have $\varphi(ba)x = 0$, which implies that $a\otimes x\in M$. Hence $M \neq \{0\}$ and consequently the universal dilation is not equivalent to the principle dilation.
\end{proof}

Note that  if $dim(V) = 1$, then $\mathcal{A}\otimes V = \{a\otimes x: a\in \mathcal{A}\}$, where $x$ is a fixed nonzero vector in $V$. So $M = \{a\otimes x: \varphi(ba)x = 0, \forall b\in\mathcal{A}\} =  \{a\otimes x: \varphi(ba)= 0, \forall b\in\mathcal{A}\}$, where we used the factor that $\varphi(ba)$ is a scalar. Thus we get

\begin{corollary} Let $(\varphi, \mathcal{A}, V)$ be a linear system such that $dim(V) = 1$. Then its universal dilation and  principle dilation are equivalent if and only if $ker(\varphi)$ does not contain any proper left ideals.
\end{corollary}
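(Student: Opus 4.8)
The plan is to reduce the statement to the criterion already in hand --- that the universal and the principle dilations of $(\varphi, V)$ are equivalent precisely when $M = \{0\}$ --- and then to transport $M$ into the algebra $\mathcal{A}$ using the one-dimensionality of $V$. Fix a nonzero $x\in V$. Since $dim(V) = 1$, the map $\iota: \mathcal{A} \rightarrow \mathcal{A}\otimes V$ given by $\iota(a) = a\otimes x$ is a linear isomorphism; it is exactly the device used in the computation preceding the statement, where it was recorded that
$$
M = \{a\otimes x : \varphi(ba) = 0 \ \text{for all}\ b\in\mathcal{A}\}.
$$
So it suffices to understand the subset $J := \iota^{-1}(M)$ of $\mathcal{A}$.

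First I would observe that $\iota$ intertwines left multiplication on $\mathcal{A}$ with $\pi_{u}$: indeed $\pi_{u}(c)\iota(a) = \pi_{u}(c)(a\otimes x) = (ca)\otimes x = \iota(ca)$, so a subspace of $\mathcal{A}\otimes V$ is $\pi_{u}$-invariant if and only if its $\iota$-preimage is a left ideal of $\mathcal{A}$. Next, since $S(a\otimes x) = \varphi(a)x$ and $x\neq 0$, we have $\iota(a)\in ker(S)$ if and only if $\varphi(a) = 0$; that is, $\iota^{-1}(ker(S)) = ker(\varphi)$. As $M$ is the largest $\pi_{u}$-invariant subspace contained in $ker(S)$, these two facts together show that $J$ is the largest left ideal of $\mathcal{A}$ contained in $ker(\varphi)$. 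Equivalently one checks directly that $J = \{a\in\mathcal{A} : \varphi(ba) = 0\ \text{for all}\ b\}$ is a left ideal (by associativity), lies in $ker(\varphi)$ (take $b = I$, using unitality), and contains every left ideal sitting inside $ker(\varphi)$ (if $L\subseteq ker(\varphi)$ is a left ideal and $a\in L$, then $ba\in L\subseteq ker(\varphi)$ for every $b$, so $a\in J$).

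The conclusion then follows by chaining equivalences: the universal and principle dilations are equivalent if and only if $M = \{0\}$, if and only if $J = \{0\}$ (as $\iota$ is injective), if and only if $ker(\varphi)$ contains no nonzero left ideal. Finally I would remark that, since $\varphi$ is unital, $\varphi(I) = 1\neq 0$ forces $I\notin ker(\varphi)$, so every left ideal contained in $ker(\varphi)$ is automatically a proper subset of $\mathcal{A}$; hence ``$ker(\varphi)$ contains no proper (equivalently, nonzero) left ideal'' is precisely the condition $J = \{0\}$, matching the usage in the preceding corollary. I do not anticipate a genuine obstacle here: the whole content is the identification $\iota$, and the only points requiring care are the bookkeeping around the word ``proper'' and confirming that $J$ is the \emph{maximal} left ideal inside $ker(\varphi)$ rather than merely one such ideal.
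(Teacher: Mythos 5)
Your proposal is correct and follows essentially the same route as the paper: the paper also reduces to the criterion $M=\{0\}$ and identifies $M=\{a\otimes x:\varphi(ba)=0,\ \forall b\in\mathcal{A}\}$ when $\dim V=1$, leaving implicit the verification (which you carry out) that the corresponding set $J\subseteq\mathcal{A}$ is the largest left ideal contained in $\ker(\varphi)$. Your explicit handling of the word ``proper'' versus ``nonzero'' is a worthwhile clarification but does not change the argument.
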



 \begin{example} Let $\mathcal{A} = \M_{n}$ be the $n\times n$ matrix algebra, and $\varphi(A) = {1\over n}tr(A)$. Then it is easy to show that $ker(\varphi)$ does not contain any proper left ideals, and hence the universal dilation is the same as its canonical dilation.  For example if $n=2$, then $\varphi(A) = \frac{a+d}{2}$, where
 $$
 A =  \left(\begin{array}{cc}
                     a & b \\
                     c & d \\
                   \end{array}
                 \right)
                 $$
                 Then the canonical (as well as the universal)  homomorphism dilation system $(\pi, S, T, \Bbb{C}^{4})$ is given by
 \[ \pi(A) =\left(
                                    \begin{array}{cccc}
                                      a & b & 0 & 0 \\
                                      c & d & 0 & 0 \\
                                      0 & 0 & a & b \\
                                      0 & 0 & c & d \\
                                    \end{array}
                                  \right)
                  \]
 with
 \[S=\left(
       \begin{array}{cccc}
         \frac{1}{2} & -\frac{1}{2} & \frac{1}{2} & \frac{1}{2} \\
       \end{array}
     \right) \ \ \mbox{ and } \ \ T=\left(
                                      \begin{array}{cccc}
                                        \frac{1}{2} &
                                        -\frac{1}{2} &
                                        \frac{1}{2} &
                                        \frac{1}{2}
                                      \end{array}
                                    \right)^{t}.
 \]
 \end{example}

  \begin{example} Let $\mathcal{A} = \mathcal{T}_{n}$ be the algebra of all the $n\times n$ upper triangular  matrices, $\mathcal{T}_{n, 0}$ be the algebra of all the $n\times n$ strictly  upper triangular matrices, and $\varphi(a) = {1\over n}tr(a)$. Then $\mathcal{T}_{n, 0}$ is a proper ideal contained in $ker(\varphi)$. Thus the universal dilation system is not equivalent to  its canonical dilation system.

  (i) For $n=2$ we have $\varphi(A) = {1\over2}(a+c)$ where
$$
A = \left(\begin{array}{cc}
                      a & b \\
                      0 & c \\
                    \end{array}
                  \right).
                  $$

 Then the universal dilation system $(\pi_{u}, S_{u}, T_{u}, \Bbb{C}^{3})$ and the canonical dilation system $(\pi_{c}, S_{c}, T_{c}, \Bbb{C}^{2})$ are given by:
$$ \pi_u(A)=\left(
             \begin{array}{ccc}
               a & 0 & 0 \\
               0 & a & b \\
               0 & 0 & c \\
             \end{array}
           \right), \ \ \ \ \ \ \mbox{and} \ \ \   \pi_c(A)=\left(
             \begin{array}{cc}
               a & 0 \\
               0 & c \\
             \end{array}
           \right),
$$
where
    \[S_u=\left(
            \begin{array}{ccc}
              \frac{1}{2} & 0 & \frac{1}{2} \\
            \end{array}
          \right),  \ \
          T_u=\left(
                \begin{array}{ccc}
                  1 &
                  0 &
                  1
                \end{array}
              \right)^{t},
    \]
     and
    \[S_c=\left(
            \begin{array}{cc}
              \frac{1}{2} & \frac{1}{2} \\
            \end{array}
          \right),  \ \
          T_c=\left(
                \begin{array}{cc}
                  1 &
                  1
                \end{array}
              \right)^{t}.
    \]
These are the only two linearly minimal homomorphism dilations.

    (ii) For the case $n=3$, we have $\varphi(A) = {1\over 3}(a+d+f)$ where
 $$
 A=\left(
                       \begin{array}{ccc}
                         a & b & c \\
                         0 & d & e \\
                         0 & 0 & f \\
                       \end{array}
                     \right).
                     $$

In this case the universal dilation system $(\pi_{u}, S_{u}, T_{u}, \Bbb{C}^{6})$ and the canonical dilation system $(\pi_{c}, S_{c}, T_{c}, \Bbb{C}^{3})$ are given by:
 $$
  \pi_{u}(A)=\left(
               \begin{array}{cccccc}
                 a & 0 & 0 & 0 & 0 & 0 \\
                 0 & a & b & 0 & 0 & 0 \\
                 0 & 0 & d & 0 & 0 & 0 \\
                 0 & 0 & 0 & a & b & c \\
                 0 & 0 & 0 & 0 & d & e \\
                 0 & 0 & 0 & 0 & 0 & f \\
               \end{array}
             \right),  \ \ \ \ \mbox{and} \  \ \pi_{c}(A) = \left(
                       \begin{array}{ccc}
                         a & 0 & 0 \\
                         0 & d & 0 \\
                         0 & 0 & f \\
                       \end{array}
                     \right),
                     $$
                     where
      \[ S_u=\left(
               \begin{array}{cccccc}
                 \frac{1}{3} & 0 & \frac{1}{3} & 0 & 0 & \frac{1}{3} \\
               \end{array}
             \right),  \ \ \ \ \
             T_u=\left(
                   \begin{array}{cccccc}
                     1 &
                     0 &
                     1 &
                     0 &
                     0 &
                     1
                   \end{array}
                 \right)^{t},
       \]
   and
     \[S_c=\left(
             \begin{array}{ccc}
               \frac{1}{3} & \frac{1}{3} & \frac{1}{3} \\
             \end{array}
           \right),  \ \ \ \
     T_c=\left(
           \begin{array}{ccc}
             1 &
             1 &
             1
           \end{array}
         \right)^{t}.
     \]

     In order to identify the rest of the equivalent classes of homomorphism dilations, we need to identify all the $\pi_{u}$-invariant subspaces in $ker(S_{u})$. Note that  $\ker (S_u)=span\{e_2, e_4, e_5\}$, and it is easy to verify that the
    maximal $\pi_u$-invariant subspace is $span\{e_2, e_4\}$, and any one-dimensional subspace of $span\{e_2, e_4\}$ is also $\pi_{u}$-invariant. Hence, by Theorem \ref{class-1},  we only have one equivalent class of $4$-dimensional homomorphism dilation systems, and infinitely many oinequivalent class of $5$-dimensional homomorphism dilation systems.

    The $4$-dimensional  equivalent class of homomorphism dilation systems is represented by $(\pi_{4}, S_{4}, T_{4}, \Bbb{C}^{4})$:
               $$
               S_{4}= \left(
                         \begin{array}{cccc}
                           \frac{1}{3} & \frac{1}{3} & 0 & \frac{1}{3} \\
                         \end{array}
                       \right), \ \
                      \pi_{4}(A) =  \left(
                         \begin{array}{cccc}
                           a & 0 & 0 & 0 \\
                           0 & d & 0 & 0 \\
                           0 & 0 & d & e \\
                           0 & 0 & 0 & f \\
                         \end{array}
                       \right), \  \
                      T_{4} =  \left(
                         \begin{array}{cccc}
                           1 &
                           1 &
                           0 &
                           1
                         \end{array}
                       \right)^{t}.
                     $$

 Two classes of homomorphism dilations systems represented by $(\pi_{5,1}, S_{5,1}, T_{5,1}, \Bbb{C}^{5})$ and $(\pi_{5,2}, S_{5,2}, T_{5,2}, \Bbb{C}^{5})$  that associated with $\pi_u$-invariant subspaces $K_{1} = span\{e_{2}\}$ and $K_{2} = span\{e_{4}\}$, respectively, are given by:
   $$
   S_{5, 1} =\left(
                         \begin{array}{ccccc}
                           \frac{1}{3} & \frac{1}{3} & 0 & 0 & \frac{1}{3} \\
                         \end{array}
                       \right), \ \
                      \pi_{5, 1}(A) = \left(
                         \begin{array}{ccccc}
                           a & 0 & 0 & 0 & 0\\
                           0 & d & 0 & 0 & 0 \\
                           0 & 0 & a & b & c \\
                           0 & 0 & 0 & d & e \\
                           0 & 0 & 0 & 0 & f \\
                         \end{array}
                       \right), \ \
                      T_{5} =  \left(
                         \begin{array}{ccccc}
                           1 &
                           1 &
                           0 &
                           0 &
                           1
                         \end{array}
                       \right)^{t}
        $$
       and
       $$
               S_{5,2}=\left(
                         \begin{array}{ccccc}
                           \frac{1}{3} & 0 & \frac{1}{3} & 0 & \frac{1}{3} \\
                         \end{array}
                       \right), \ \
                      \pi_{5,2}(A) = \left(
                         \begin{array}{ccccc}
                           a & 0 & 0 & 0 & 0\\
                           0 & a & b & 0 & 0 \\
                           0 & 0 & d & 0 & 0 \\
                           0 & 0 & 0 & d & e \\
                           0 & 0 & 0 & 0 & f \\
                         \end{array}
                       \right), \ \
                       T_{5,2}=\left(
                         \begin{array}{ccccc}
                           1 &
                           0 &
                           1 &
                           0 &
                           1
                         \end{array}
                         \right)^{t}
             $$
We leave the construction of the homomorphism dilation associated with the $\pi_{u}$-invariant subspace $K_{\alpha, \beta} = span\{\alpha e_{2} + \beta e_{4}\}$ for the interested readers.
 \end{example}

  \begin{example}  Let $\varphi:\M_2\to\M_2$ be defined by
\[
\varphi\left(\left(
            \begin{array}{cc}
              \alpha_1 & \alpha_2 \\
              \alpha_3 & \alpha_4 \\
            \end{array}
          \right)
\right)=\left(
          \begin{array}{cc}
            \displaystyle\sum_{i=1}^4 a_i\alpha_i & \displaystyle\sum_{i=1}^4 b_i\alpha_i \\
            \displaystyle\sum_{i=1}^4 c_i\alpha_i & \displaystyle\sum_{i=1}^4 d_i\alpha_i \\
          \end{array}
        \right)
.
\] Then we have the universal dilation $\pi_{u}:\M_2\to\M_8$ given by
\[
\pi_{u}\left(\left(
            \begin{array}{cc}
              \alpha_1 & \alpha_2 \\
              \alpha_3 & \alpha_4 \\
            \end{array}
          \right)\right)=\left(
                           \begin{array}{cccccccc}
                             \alpha_1 & \alpha_2 & 0 & 0 & 0 & 0 & 0 & 0 \\
                             \alpha_3 & \alpha_4 & 0 & 0 & 0 & 0 & 0 & 0 \\
                             0 & 0 & \alpha_1 & \alpha_2 & 0 & 0 & 0 & 0 \\
                             0 & 0 & \alpha_3 & \alpha_4 & b & 0 & 0 & 0 \\
                             0 & 0 & 0 & 0 & \alpha_1 & \alpha_2 & 0 & 0 \\
                             0 & 0 & 0 & 0 & \alpha_3 & \alpha_4 & e & 0 \\
                             0 & 0 & 0 & 0 & 0 & 0 & \alpha_1 & \alpha_2 \\
                             0 & 0 & 0 & 0 & 0 & 0 & \alpha_3 & \alpha_4 \\

                           \end{array}
                         \right)
\] with
$$
S_{u}=
       \left(
                    \begin{array}{cccccccc}
                      a_1 & a_3 & a_2 & a_4 & b_1 & b_3 & b_2 & b_4 \\
                      c_1 & c_3 & c_2 & c_4 & d_1 & d_3 & d_2 & d_4 \\
                    \end{array}
                  \right)
$$
and
$$
     T_{u}=\left(
         \begin{array}{cccccccc}
           1 & 0 &0&1&0&0&0&0\\
           0 & 0 &0&0&1&0&0&1
         \end{array}
       \right).
$$
\end{example}

Let $\tau$ and $\sigma$ the linear maps on $\M_2$ defined by $\tau(A) = A^{t}$ and $\sigma(A) = {1\over 2}Tr(A)I$, where $I\in\M_{2}$ is the identity matrix. Then it can be shown that there is no nontrivial $\pi_{u}$-invariant subspaces in $ker(S_{u})$, and thus the above formula also gives us the canonical dilation. The situation becomes quite different for transpose map of triangular matrices. For simplicity let us examine the transpose map on $\mathcal{T}_{2}$ and $\mathcal{T}_{3}$.

  \begin{example} Let $\tau:\mathcal{T}_2\to\M_2$ be the transpose map. Then  the universal dilation  system is given by:
\[ 
\pi_u\left(\left(
\begin{array}{cc}
 a & b  \\
 0 & c
\end{array}
\right)\right)=\left(
\begin{array}{cccccc}
 a & 0 & 0 & 0 & 0 & 0 \\
 0 & a & b & 0 & 0 & 0 \\
 0 & 0 & c & 0 & 0 & 0 \\
 0 & 0 & 0 & a & 0 & 0 \\
 0 & 0 & 0 & 0 & a & b \\
 0 & 0 & 0 & 0 & 0 & c \\
\end{array}
\right)
\] with
\[ S_u=\left(
       \begin{array}{cccccc}
         1 & 0 & 0 & 0 & 0 & 0 \\
         0 & 1 & 0 & 0 & 0 & 1 \\
       \end{array}
     \right) \ \
     \mbox{ and } \ \
     T_u=\left(
         \begin{array}{cccccc}
           1 & 0 &1&0&0&0\\
           0 & 0 &0&1&0&1
         \end{array}
       \right)^{t}.
 \]
 The canonical dilation system is given by:
\[
\pi_c\left(\left(
\begin{array}{cc}
 a & b  \\
 0 & c
\end{array}
\right)\right)=\left(
\begin{array}{cccc}
 a & 0 & 0 & 0 \\
 0 & c & 0 & 0 \\
 0 & 0 & a & b \\
 0 & 0 & 0 & c \\
\end{array}
\right)
\] with
\[ S_c=\left(
       \begin{array}{cccc}
         1 & 0 & 0 & 0 \\
         0 & 1 & 1 & 0 \\
       \end{array}
     \right) \ \
     \mbox{ and } \ \
     T_c=\left(
         \begin{array}{cccc}
           1 & 0 &0&1\\
           0 & 1 &0&0
         \end{array}
       \right)^{t}.
 \]

 Furthermore, we have $$\ker S_u=span \{e_2-e_6, e_3, e_4, e_5\}.$$ In $\ker S_u$, the maximal $\pi_u$-invariant subspace
 is $M = span\{e_4, e_5\}$,  and for any given $\alpha, \beta$, the one-dimensional subspace $K_{\alpha, \beta} = span\{\alpha e_{4}+\beta e_{5}\}$ is $\pi_{u}$-invariant. So, again, there are infinitely many inequivalent classes of $5$-dimensional dilations. The two special ones corresponding to $K_{1, 0}$ and $K_{0, 1}$ are represented by:

The
 \[ \left(
      \begin{array}{ccccc}
        1 & 0 & 0 & 0 & 0 \\
        0 & 1 & 0 & 0 & 1 \\
      \end{array}
    \right)\left(
             \begin{array}{ccccc}
               a & 0 & 0 & 0 & 0 \\
               0 & a & b & 0 & 0 \\
               0 & 0 & c & 0 & 0 \\
               0 & 0 & 0 & a & b \\
               0 & 0 & 0 & 0 & c \\
             \end{array}
           \right)\left(
                    \begin{array}{cc}
                      1 & 0 \\
                      0 & 0 \\
                      1 & 0 \\
                      0 & 0 \\
                      0 & 1 \\
                    \end{array}
                  \right) \ \ \mbox{ and }
  \]
  \[ \left(
      \begin{array}{ccccc}
        1 & 0 & 0 & 0 & 0 \\
        0 & 1 & 0 & 0 & 1 \\
      \end{array}
    \right)\left(
             \begin{array}{ccccc}
               a & 0 & 0 & 0 & 0 \\
               0 & a & b & 0 & 0 \\
               0 & 0 & c & 0 & 0 \\
               0 & 0 & 0 & a & 0 \\
               0 & 0 & 0 & 0 & c \\
             \end{array}
           \right)\left(
                    \begin{array}{cc}
                      1 & 0 \\
                      0 & 0 \\
                      1 & 0 \\
                      0 & 1 \\
                      0 & 1 \\
                    \end{array}
                  \right). \]

 (ii) Let $\tau:\mathcal{T}_3\to\M_3$ be the transpose map
\[
\tau\left(\left(
            \begin{array}{ccc}
              a & b & c \\
              0 & d & e \\
              0 & 0 & f \\
            \end{array}
          \right)
\right)=\left(
          \begin{array}{ccc}
            a & 0 & 0 \\
            b & d & 0 \\
            c & e & f \\
          \end{array}
        \right)
.
\] Then we have the canonical dilation $\pi_c:\cT_3\to\M_{10}$ by
\[
\pi_c\left(\left(
            \begin{array}{ccc}
              a & b & c \\
              0 & d & e \\
              0 & 0 & f \\
            \end{array}
          \right)\right)=\left(
                           \begin{array}{cccccccccc}
                             a & 0 & 0 & 0 & 0 & 0 & 0 & 0 & 0 & 0 \\
                             0 & d & 0 & 0 & 0 & 0 & 0 & 0 & 0 & 0 \\
                             0 & 0 & f & 0 & 0 & 0 & 0 & 0 & 0 & 0 \\
                             0 & 0 & 0 & a & b & 0 & 0 & 0 & 0 & c \\
                             0 & 0 & 0 & 0 & d & 0 & 0 & 0 & 0 & e \\
                             0 & 0 & 0 & 0 & 0 & d & e & 0 & 0 & 0 \\
                             0 & 0 & 0 & 0 & 0 & 0 & f & 0 & 0 & 0 \\
                             0 & 0 & 0 & 0 & 0 & 0 & 0 & a & b & 0 \\
                             0 & 0 & 0 & 0 & 0 & 0 & 0 & 0 & d & 0 \\
                             0 & 0 & 0 & 0 & 0 & 0 & 0 & 0 & 0 & f \\
                           \end{array}
                         \right)
\] with
\[ S_c=
       \left(
                    \begin{array}{cccccccccc}
                      1 & 0 & 0 & 0 & 0 & 0 & 0 & 0 & 0 & 0 \\
                      0 & 1 & 0 & 0 & 0 & 0 & 0 & 1 & 0 & 0 \\
                      0 & 0 & 1 & 1 & 0 & 1 & 0 & 0 & 0 & 0 \\
                    \end{array}
                  \right)
      \ \
     \mbox{ and } \ \
     T_c=\left(
         \begin{array}{ccc}
           1 & 0 & 0 \\
           0 & 1 & 0 \\
           0 & 0 & 1 \\
           0 & 0 & 0 \\
           0 & 0 & 0 \\
           0 & 0 & 0 \\
           0 & 1 & 0 \\
           0 & 0 & 0 \\
           1 & 0 & 0 \\
           1 & 0 & 0 \\
         \end{array}
       \right) .
 \]

  \end{example}

\begin{example} Let $v:\M_2\to\M_2$ be the linear map
\[
v\left(\left(
\begin{array}{cc}
a & b  \\
c & d
\end{array}
\right)\right)=\left(
\begin{array}{cc}
 \alpha_1(\xi_1 a+\xi_2 b)+\alpha_2(\xi_1 c+\xi_2 d) & 0  \\
 0 & \beta_1(\gamma_1 a +\gamma_2 b)+\beta_2(\gamma_1 c+\gamma_2 d)
\end{array}
\right).
\] Then we have the (linearly minimal) dilation $\pi:\cT_2\to\M_4$ by
\[
\pi\left(\left(
\begin{array}{cc}
 a & b  \\
 c & d
\end{array}
\right)\right)=\left(
\begin{array}{cccc}
 a & b & 0 & 0 \\
 c & d & 0 & 0 \\
 0 & 0 & a & b \\
 0 & 0 & c & d \\
\end{array}
\right)
\] with
\[ S=\left(
       \begin{array}{cccc}
         \alpha_1 & \alpha_2 & 0 & 0 \\
         0 & 0 & \beta_1 & \beta_2 \\
       \end{array}
     \right) \ \
     \mbox{ and } \ \
     T=\left(
         \begin{array}{cc}
           \xi_1 & 0 \\
           \xi_2 & 0 \\
           0 & \gamma_1 \\
           0 & \gamma_2 \\
         \end{array}
       \right).
 \]
\noindent We remark that this is the principle dilation for the following maps $d$ and $\phi$ on $\M_2$:
\[ d\left(\left(
            \begin{array}{cc}
              a & b \\
              c & d \\
            \end{array}
          \right)\right)=\left(
            \begin{array}{cc}
              a & 0 \\
              0 & d \\
            \end{array}
          \right) \ \ \mbox{ and } \ \
          \phi\left(\left(
            \begin{array}{cc}
              a & b \\
              c & d \\
            \end{array}
          \right)\right)=\left(
            \begin{array}{cc}
               a & 0 \\
              0 & a \\
            \end{array}
          \right).\]
\end{example}

\end{document}